\newtheorem{theorem}{Theorem}[section]
\newtheorem{proposition}[theorem]{Proposition}
\newtheorem{lemma}[theorem]{Lemma}
\newtheorem{remark}{Remark}[section]
\newtheorem{theo}{Theorem}
\theoremstyle{definition}
\newtheorem{definition}[theorem]{Definition}
\newcommand{\R}{\varmathbb R}
\newcommand{\N}{\varmathbb N}
\newcommand{\Hs}{\mathcal H}
\newcommand{\E}{\mathcal E}
\newcommand{\reff}[1]{(\ref{#1})}
\renewcommand{\Re}{\mbox{Re}}
\title{Invariant measure for the cubic wave equation on the unit ball of $\R^3$}
\author{Anne-Sophie de Suzzoni\footnote{D\'epartement de Math\'ematiques, Universit\'e de Cergy-Pontoise, Site de Saint Martin, 2, av Adolphe Chauvin, 95302 Cergy-Pontoise Cedex, FRANCE, e-mail : \texttt{anne-sophie.de-suzzoni@u-cergy.fr}}}
\begin{document}

\maketitle

\begin{abstract}This paper deals with the invariance of a measure on Sobolev spaces of low regularity under the flow of the cubic non linear wave equation on the unit ball of $\R^3$ under the assumption of spherical symmetry. It presents two aspects, an analytic one which includes the treatment of local properties of the flow, and a probabilistic one, which is mainly related to the global extension of the flow and the invariance of the measure.\end{abstract}

\tableofcontents

\section{Introduction}

The goal here is to prove the invariance of the measure $\rho$ constructed in \cite{wavbt}, II, under the flow of the cubic non linear wave equation on the unit ball of $\R^3$, hence answering the remark 6.3 of the paper by Nicolas Burq and Nikolay Tzvetkov.

\smallskip

The equation studied is : 

\begin{equation}\label{real} \left \lbrace{ \begin{tabular}{ll}
$\partial_t^2 f - \Delta_{B^3}f + f^3 = 0$ & $(t,x) \in \R \times B^3$ \\
$f|_{t=0} = f_0$ & $\partial_t f|_{t=0} = f_1 $ \end{tabular} } \right. \; , \end{equation}
where $f$ is real, radial, $B^3$ is the unit ball in $\R^3$, and $\Delta_{B^3}$ is the Laplace-Beltrami operator on $B^3$ with Dirichlet boundary conditions. Though, it is soon to be changed into its complex form, that is, writing $H = \sqrt{ -\Delta_{B^3}}$ and $u = f-iH^{-1}\partial_t f$,

\begin{equation}\label{compl} \left \lbrace{ \begin{tabular}{ll}
$i\partial_t u + Hu + H^{-1}(\Re u)^3 = 0$ \\
$u|_{t=0} = u_0 = f_0 -iH^{-1}f_1 $ \end{tabular} } \right. \; .\end{equation}

In order to define the measure invariant under the flow of \reff{compl}, a sequence of independant complex centered and normalised (law $\mathcal N (0,1)$)  $(g_n)_n$ is introduced, along with the measure $\mu$, which is the image measure of the (well-defined) map from a probability space to the Sobolev space $H^\sigma$, $\sigma<\frac{1}{2}$ : 

$$\varphi(\omega, r) = \sum_{n=1}^\infty \frac{g_n}{\pi n} e_n$$
where $e_n$ are the radial eigenfunctions of $\Delta_{B^3}$ with eigenvalues $\pi^2n^2$. It makes $\mu$ a sort of limit of gaussian on $\R^N$ when $N$ goes to infinity.

\smallskip

The measure $\rho$ is then defined as : 

$$d\rho (u) = e^{-\frac{1}{4}||u||_{L^4(B^3)}^4}d\mu(u)$$
absolutely continuous wrt $\mu$. It has been proved that $\rho$ is genuine, the norm $||\; .\; ||_{L^4(B^3)}$ being $\mu$ - almost surely finite. 

\smallskip

It comes from \cite{wavbt} :

\begin{theo}[Burq,Tzvetkov] Let $\sigma< \frac{1}{2}$. There exists a set $\Sigma \subseteq H^\sigma$ of full $\mu$ or $\rho$ (which is equivalent) measure such that for any initial data $u_0$, the flow is globally well-defined and what is more the solution of \reff{compl} is unique in $S(t)u_0 + H^s$ where $S(t)$ is the flow of the linear equation $i\partial_t u + H u = 0$ and $s$ is some real number $s>\frac{1}{2}$. \end{theo}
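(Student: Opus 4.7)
The plan is to follow Bourgain's invariant measure strategy combined with a Da Prato--Debussche type decomposition. The obstruction is that $\mu$-typical initial data $u_0 \in H^\sigma$ with $\sigma<\frac{1}{2}$ lies below any Sobolev threshold for deterministic local well-posedness for a cubic wave in three dimensions; the Gaussian structure of $\mu$ must be exploited to recover extra integrability, and the globalization then has to be extracted from invariance of $\rho$ under finite-dimensional approximations.

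First I would develop a local Cauchy theory in $S(t)u_0+H^s$ for some $s>\frac{1}{2}$. Setting $u=S(t)u_0+v$ and writing the equation for $v$, the forcing is a sum of monomials in $\Re(S(t)u_0)$ and $\Re v$, each hit by $H^{-1}$. For $v$ at regularity $H^s$ a standard contraction argument works, provided one has space-time estimates on $S(t)u_0$. These I would obtain probabilistically: Khinchin's inequality applied to the explicit series $\sum \frac{g_n}{\pi n}e^{-i\pi n t}e_n$, together with eigenfunction bounds for the radial Laplacian on $B^3$, yields $L^p_tL^q_x$ control of $S(t)u_0$ off a set of arbitrarily small $\mu$-measure. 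Fixed-point iteration then gives a local solution on $[-T(u_0),T(u_0)]$ with $T(u_0)$ depending only on the size of these random norms, and uniqueness in the class $S(t)u_0+H^s$ is automatic from the fixed-point formulation.

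Next I would introduce spectral truncations $\Pi_N$ onto the first $N$ radial modes and the corresponding ODE flow $\Phi_t^N$ on $\Pi_N H^\sigma$. This flow is Hamiltonian with conserved energy $\frac{1}{2}\|H u^N\|_{L^2}^2 +\frac{1}{4}\|\Re u^N\|_{L^4}^4$, so Liouville's theorem gives invariance of the finite-dimensional Gibbs measure $d\rho_N=e^{-\|\Pi_N u\|_{L^4}^4/4}d\mu_N$ under $\Phi_t^N$. I would also need a stability statement comparing $\Phi_t^N$ to the true local flow of \reff{compl} on bounded sets of $H^\sigma$, of the same contraction-mapping nature as the local theory above, and convergence of $\rho_N$ to $\rho$ on balls of $H^\sigma$.

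The main obstacle is globalization, and here I would use Bourgain's argument. For $T,\varepsilon>0$, define $\Sigma^N_{T,\varepsilon}\subset H^\sigma$ as the set of $u_0$ such that $\|\Phi^N_t u_0\|_{H^\sigma}\le R$ for all $|t|\le T$, with $R$ chosen comparable to $\log(T/\varepsilon)$. Covering $[-T,T]$ by intervals of length equal to the local existence time at size $R$, applying the local theory on each, and exploiting invariance of $\rho_N$ yields $\rho_N(\Sigma^N_{T,\varepsilon})\ge 1-\varepsilon$ uniformly in $N$. Passing to the limit via the stability statement and the convergence $\rho_N\to\rho$ gives $\Sigma_{T,\varepsilon}\subset H^\sigma$ of $\rho$-measure $\ge 1-\varepsilon$ on which the full flow of \reff{compl} is defined on $[-T,T]$ and stays in a fixed ball of $H^\sigma$. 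Finally, $\Sigma=\bigcap_{k\in\N}\bigcup_{j\in\N}\Sigma_{k,1/j}$ has full $\rho$-measure (equivalently full $\mu$-measure), and on $\Sigma$ the flow is globally defined and unique in $S(t)u_0+H^s$.
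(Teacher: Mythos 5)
This theorem is not proved in the paper; it is quoted from Burq--Tzvetkov \cite{wavbt}, and the paper only describes the strategy and then sets up analogous machinery (theorem \ref{locex}, proposition \ref{majd}, the sets $A(D)$ and $\Pi_{N,i}$) for its own invariance result. Your overall architecture does match that strategy: the Da Prato--Debussche decomposition $u=S(t)u_0+v$, probabilistic space-time bounds on $S(t)u_0$ coming from the Gaussian structure and the explicit eigenfunctions $e_n(r)=\sin(n\pi r)/(\sqrt\pi r)$, a contraction in $X^s_T$ for the remainder $v$, finite-dimensional truncation with Liouville invariance of the Gibbs measure, and a Bourgain-type globalization. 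The uniqueness statement in $S(t)u_0+H^s$ does indeed come for free from the fixed-point formulation, as you say.

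There is, however, a concrete gap in your globalization step, and it is exactly the point the paper singles out in its introduction as the subtlety of this problem. You define the good set as $\Sigma^N_{T,\varepsilon}=\{u_0:\ \|\Phi^N_t u_0\|_{H^\sigma}\le R\ \text{for all } |t|\le T\}$ and then cover $[-T,T]$ by intervals ``of length equal to the local existence time at size $R$''. But at regularity $\sigma<\tfrac12$ the local existence time is \emph{not} a function of $\|u_0\|_{H^\sigma}$. In theorem \ref{locex} the time of existence is $\tau=c(1+A)^{-\gamma}$ where $A$ bounds $\|S(t)u_0\|_{L^p_{t\in[0,2],\,x\in B^3}}$, a Strichartz-type norm of the \emph{free} evolution. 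This quantity is controlled by $\|u_0\|_{H^s}$ only for $s=\tfrac32-\tfrac4p>\tfrac12$ (proposition \ref{combi}, with $p\in{]4,6[}$), so $\|u_0\|_{H^\sigma}$ with $\sigma<\tfrac12$ gives no such bound. The paper emphasizes this: ``the control that ensures the existence of global strong solution in \cite{wavbt} is the norm $\|S(t)u_0\|_{L^p_{t\in[0,2],x\in B^3}}$,'' and precisely for that reason the usual Bourgain argument, where the $H^\sigma$ bound determines a uniform local time, does not apply verbatim. To close the iteration one has to propagate along the flow the joint constraint embodied in $A(D)=\{u_0:\ \|S(t)u_0\|_{L^p_{t,x}}\le D\ \text{and}\ \|u_0\|_{H^\sigma}\le D\}$, apply the large-deviation bound of proposition \ref{majd} to \emph{that} set, and verify at each step that $\psi(\pm T_{i,j})u_0$ lands back in some $A(D_{i,j+1})$ (this is the content of the paper's sets $\Pi_{N,i}$ and lemma \ref{discon}). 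As written, your $\Sigma^N_{T,\varepsilon}$ only rules out growth of the $H^\sigma$ norm, so the covering argument and the subsequent stability/limit step do not go through.
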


Using the ideas of the proof of this theorem and the local property of the solution, the following theorem will be proved.

\begin{theo}\label{tttoo} There exists a set $\Pi \subseteq H^\sigma$ of full $\rho$ measure such that the solution of \reff{compl} is strongly globally well-defined for any initial data taken in $\Pi$ and that all measurable set $A$ included in $\Pi$ satisfies at all time $t$ : 

$$\rho(\psi(t) A) = \rho(A) \; .$$\end{theo}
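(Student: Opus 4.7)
The plan is to follow the classical Bourgain strategy, exploiting the Hamiltonian structure of the equation together with finite-dimensional approximations in which invariance is elementary. Let $E_N = \mbox{span}(e_1, \ldots, e_N)$ and let $\pi_N$ be the $L^2$-orthogonal projection onto $E_N$. The truncated equation
\begin{equation*}
i\partial_t u_N + H u_N + \pi_N H^{-1}(\Re \pi_N u_N)^3 = 0
\end{equation*}
defines a Hamiltonian ODE on $E_N \simeq \varmathbb{C}^N$, so its flow $\psi_N(t)$ is globally defined and, by Liouville's theorem together with conservation of the truncated Hamiltonian, preserves the measure $d\rho_N = e^{-\frac{1}{4}\|\pi_N u\|_{L^4}^4} d\mu_N$, where $\mu_N$ is the Gaussian image of $\omega \mapsto \sum_{n \le N} \frac{g_n(\omega)}{\pi n} e_n$.

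Next I would establish a local convergence statement: for every $T>0$ and every $\varepsilon>0$, there exists a set $\Omega_{T,\varepsilon} \subseteq H^\sigma$ with $\rho(H^\sigma \setminus \Omega_{T,\varepsilon}) < \varepsilon$ on which $\psi_N(t)u_0 \to \psi(t)u_0$ in $H^\sigma$ uniformly in $t \in [-T,T]$ and $u_0 \in \Omega_{T,\varepsilon}$. This relies on the local well-posedness in $S(t)u_0 + H^s$ coming from Burq and Tzvetkov, together with probabilistic Strichartz-type smoothing estimates on the Gaussian data that control the $L^4$ norm of $S(t)u_0$. The analogous statement with $\rho_N$ in place of $\rho$ then follows from the convergence of the densities $e^{-\frac{1}{4}\|\pi_N u\|_{L^4}^4}$ to $e^{-\frac{1}{4}\|u\|_{L^4}^4}$ in $L^1(\mu)$.

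I would then upgrade the invariance of $\rho_N$ to that of $\rho$ by a standard approximation: for any cylindrical Borel set $A$ and any $\varepsilon>0$, decompose
\begin{equation*}
\rho(\psi(t)A) \le \rho_N(\psi_N(t)A) + |\rho - \rho_N|(\psi_N(t)A) + \rho\bigl(\psi(t)A \, \triangle \, \psi_N(t)A\bigr),
\end{equation*}
where the first term equals $\rho_N(A)$ by the finite-dimensional invariance, the second tends to zero as $N\to\infty$ by $L^1(\mu)$-convergence of the densities, and the third is controlled on $\Omega_{T,\varepsilon}$ by the local convergence. A symmetric lower bound yields $\rho(\psi(t)A)=\rho(A)$ for cylinders, and a monotone class argument extends it to all Borel sets. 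The set $\Pi$ is finally obtained as $\bigcap_{t \in 2^{-\varmathbb N}\varmathbb Z} \psi(-t)\Sigma$, which is $\rho$-full, flow-invariant, and contained in $\Sigma$, so that the solution is strongly globally defined on $\Pi$ and the invariance identity holds on every measurable $A \subseteq \Pi$ by continuity in $t$.

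The main obstacle is the quantitative step: establishing, uniformly in $N$, a large-deviation-type bound on the bad set where $\psi_N$ fails to approximate $\psi$. This requires combining the probabilistic $L^4_{t,x}$ bounds on $S(t)\varphi$ with a Gronwall argument on the difference $v_N = u_N - \pi_N u$, while handling the commutator $[\pi_N,(\Re u)^3]$ in the $H^\sigma$ norm with $\sigma<\frac{1}{2}$, where deterministic product estimates are too weak and one is forced to use the randomness of $u_0$.
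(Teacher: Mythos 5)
Your proposal mirrors the high-level Bourgain strategy, but it misses the one point that the paper itself identifies as the obstruction to the classical approach, and this gap is fatal to the argument as written. You truncate with the $L^2$-orthogonal projection $\pi_N$ and work on $E_N\simeq\varmathbb C^N$, so the initial data is implicitly projected and $\rho_N$ lives on $E_N$. The local well-posedness time provided by the Burq--Tzvetkov theory depends on the norm $\|S(t)u_0\|_{L^p_{t,x}}$, and this quantity is \emph{not} controlled uniformly by $\pi_N u_0\to u_0$ in $H^\sigma$: the orthogonal projections are not uniformly bounded on $L^p$, so $\|S(t)\pi_N u_0\|_{L^p}$ need not converge to $\|S(t)u_0\|_{L^p}$, and the uniform-in-$u_0$ convergence you posit on $\Omega_{T,\varepsilon}$ does not follow. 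The paper circumvents this by (i) replacing $\pi_N$ with a smooth multiplier $S_N=\chi(-\Delta/N^2)$, which \emph{is} uniformly bounded on $L^p$, (ii) \emph{not} projecting the initial data, so the truncated equation is posed on all of $H^\sigma$ with a linear evolution on $E_N^\perp$ and a finite-dimensional Hamiltonian evolution on $E_N$, and (iii) defining $\rho_N$ on $H^\sigma$ as $d\rho_N=e^{-\frac14\int|S_N\Re u|^4}\,d\mu$, so that the comparison $|\rho(A)-\rho_N(A)|\leq\|f-f_N\|_{L^1_\mu}$ makes sense for subsets of $H^\sigma$ (in your decomposition $|\rho-\rho_N|(\psi_N(t)A)$ is ambiguous when $\rho$ and $\rho_N$ live on different spaces). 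The ``main obstacle'' you flag at the end --- the Gronwall estimate for $\psi(t)u_0-\psi_N(t)u_0$ and the commutator at regularity $\sigma<\frac12$ --- is precisely the step that becomes tractable with these choices: the paper works in $X^s_T$ with $s=\frac32-\frac4p$, splits the difference into a tail $(1-S_N)(\ldots)$ gaining $N^{s-s_1}$ through the smoothing of $H^{-1}$ and the Strichartz-dual bound, and a frequency-localized term absorbed by a small factor $D^2T^{2\gamma_2}$ (Lemma~\reff{loccg}). You do not produce this argument.

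The construction $\Pi=\bigcap_{t\in 2^{-\varmathbb N}\varmathbb Z}\psi(-t)\Sigma$ is also problematic. The nonlinear flow $\psi$ is only defined on $\Sigma$, so $\psi(-t)\Sigma$ (and hence the intersection) is not well-defined before global existence and the relevant invariance are established, and to show that this $\Pi$ has full measure you would already need the invariance of $\rho$ under $\psi$ on sets not yet known to carry a globally defined flow. The paper avoids the circularity by defining $\Pi$ in terms of the \emph{finite-dimensional} flows $\psi_N$, which are globally defined on all of $H^\sigma$: the sets $\Pi_{N,i}=\{u_0: \psi_N(\pm T_{i,j})u_0\in A(D_{i,j+1})\}$ are controlled directly using the invariance of $\rho_N$ under $\psi_N$ together with the Gaussian tail bounds $\mu(A(D)^c)\leq 2e^{-cD^2}$, giving $\rho(\Pi^c)=0$ with no reference to $\psi$. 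Global existence and global invariance on $\Pi$ are then obtained by a recurrence over the time grid $T_{i,j}=\sum_{l\leq j}\tau_1(D_{i,l})$, whose divergence must be checked (it requires $\gamma_1<2$, hence $D_{i,j}\sim j^{1/(2\gamma_1)}$ diverges slowly enough); your proposal does not address this quantitative balance between the growth of the control parameter and the decay of the local time.
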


Before going further, it has to be understood that $\rho$ is built to be invariant under the flow of the non linear wave equation. Actually, by applying a cut-off on the frequency of the Lapace-Beltrami operator with radial symmetry and Dirichlet boundary conditions of the unit ball in $\R^3$, the NLW is approached by PDE in finite dimension, suceptible to finite dimension theory, like Cauchy-Lipschitz theorem. Indeed, call $E_N$ the space linearly spanned by $N$ first eigen functions of the radial Laplace-Beltrami operator on the unit ball of $\R^3$ with Dirichlet boundary conditions, by using projectors on $E_N$, or, better to say, operators that send $H^s$ into $E_N$ which are more regular than mere orthogonal projectors, the non linear wave equation can be reduced onto a problem on $E_N$, which admits a unique maximal condition thanks to Cauchy-Lipschitz theorem that can be proved to be a global one thanks to the existence of a conserved positive energy. The reduction is chosen such that the solutions converges in the space of distributions towards a solution of the non linear wave equation.

\smallskip

Then, finding a measure $\rho_N$ on $E_N$ which is invariant under the flow of this equation relies mostly on the existence of a conserved energy and Liouville theorem. It happens that the sequence $\rho_N$ extended to $H^\sigma$ converges towards a non trivial measure $\rho$ on $H^\sigma $. The measure $\rho$ being a limit of $\rho_N$, it is expected to be invariant under the flow of NLW. 

\smallskip

In previous works, like \cite{artbou,bourgainfin,bourtwod,linear}, the strategy applied to prove the invariance of the measure in infinite dimension used the fact that the initial datum was taken in spaces $\Hs$ such that the orthogonal projectors $\Pi_N$ on $E_N$ were uniformly bounded, that is to say, there exists $C$ independant from $N$, such that $||\Pi_N||_{\Hs\rightarrow \Hs} \leq C$. So the projections $\Pi_N u$ converged toward $u$ in $\Hs$ uniformly in every compact subset of $\Hs$. In these cases, by approaching the the initial data in $\Hs$ by its projections, the flow with initial data $u_0$ could be approached by the finite dimensional flow of $\Pi_N u_0$, the convergence being uniform on any compact set of initial data in $\Hs$. However, here, the control that ensures the existence of global strong solution in \cite{wavbt} is the norm :

$$||S(t)u_0||_{L^p_{t\in [0,2],x\in B^3}} \; ,$$
and thus the convergence of $\Pi_N u_0$ in $H^\sigma$ does not ensure the convergence of this norm applied to $\Pi_N u_0$ and thus not the uniform (regarding the initial data) convergence of finite dimensional solutions towards the global solution.

\smallskip

This problem can be solved though by introducing slightly different ``finite" measures and dimensional equations. Instead of entirely reducing the problem to a problem on $E_N$, only its non linear part will, that is to say, the initial data will be taken in $H^\sigma$ but the non linear part will be projected on $E_N$. Therefore, the reduced problem will present two parts : a linear and of infinite dimension one and a finite dimensional though non linear one. Then, the reduced measure $\rho_N$, instead of being defined on $E_N$ will be defined on all $H^\sigma$ (and still invariant under the flow). The initial data, thus, will not have to be approached, only the flow will, like in \cite{lestrois}. Unlike in \cite{lestrois} though, considerations on the reversibility of the flow will be confined to the linear treatment. Indeed, the flow of the linear equation is defined on all $H^\sigma$ which makes it easier to manipulate. The afore-mentioned strategy using the uniform bound of the projectors will be used to prove the invariance of $\mu$ under the linear flow. Nevertheless, the flow of the NLW being defined only on a subset $\Sigma$ of $H^s$, this subset has to be invariant under the flow if the reversibility of the flow must be used.

\smallskip

To sum up, \cite{wavbt} will provide the topological framework and the local results of existence for the non linear wave equation, \cite{lestrois} the descriptions of the ``new" partly finite dimensional measures, \cite{linear} a guideline to prove the invariance of the measure $\mu$ under the linear flow, \cite{gaussmea} the main ideas and properties about random gaussian series, and thanks to all these results, the invariance of  $\rho$ shall be proved.

\smallskip

\paragraph{Plan of the paper. }The first part is a reminder of the results of \cite{wavbt,linear,lestrois} rewritten in a slightly different form in order to fit with the framework. The results of \cite{wavbt} are stated at the beginning to display the theorems that compose the starting point. Then, the approximation of the non linear wave equation by finite dimensional problems is detailed. Finally, the first part of the proof of theorem \reff{tttoo} is given, that is, the construction of the measures $\mu$ and $\rho$ and the invariance of $\mu$ under the linear flow.

\smallskip

The second part is mainly analytical, it deals with the local properties of the flow. First, the local existence of the flow is derived from \cite{wavbt}, then a result of local (in time) uniform (for the initial data) convergence of the approched flow towards the local flow of NLW is given, which leads to a result of local invariance of $\rho$ under the local flow.

\smallskip

The last one is dedicated to the extension of the local solution to a global one when the initial data is taken in $\Pi$ and then of the extension of the local invariance result to a global one.

\section{Existence of solution for the cubic NLW}

\subsection{Statement of the main results}

In \cite{wavbt}, Nicolas Burq and Nikolay Tzvetkov have proved that there existed a large subset of $H^\sigma$ with $\sigma < \frac{1}{2}$ that could be taken as initial data for the 3D-non linear wave equation : 

\begin{equation}\label{threednwe} (\partial_t^2  - \Delta )u + u^3 = 0 \end{equation}
with $u$ a radial function and $\Delta$ the Laplace-Beltrami operator on the unit ball of $\R^3$. 

\smallskip

The first paper shows the existence of local solution using a randomization of the initial data.

\smallskip

The randomization is given by :

\begin{definition} Let $s\geq \frac{8}{21}$, $f=(f_1,f_2) \in H^s\times H^{s-1}$ and $\alpha_n$, $\beta_n$ the sequences defined as : 

$$f_1 = \sum_n \alpha_n e_n \; , \; f_2 = \sum_n \beta_n e_n$$
with $e_n$ the eigenfunctions of $\Delta$ on the unit ball with Dirichlet or Neumann conditions. 

\smallskip

Then, let $h_n$, $l_n$ be sequences of real centered gaussian variables, independant from each other on a probability space $\Omega, P$. Set : 

$$f^\omega = (f_1^\omega , f_2^\omega)$$
with

$$f_1^\omega = \sum_n h_n(\omega) \alpha_n e_n \; , \; f_2^\omega = \sum_n l_n(\omega) \beta_n e_n \; .$$
\end{definition}

Then, a local solution exists :

\begin{theorem}Assume $s\geq \frac{8}{21}$ and $f\in H^s \times H^{s-1}$. Set $f^\omega$ defined according to the previous randomization. There exists a regularity parameter $\sigma \geq \frac{1}{2}$ such that for almost all $\omega \in \Omega$, there is a time $T_\omega> 0$ such that there is a unique solution to \reff{threednwe} in

$$\cos (\sqrt{-\Delta }t)f_1^\omega + \frac{\sin(\sqrt{-\Delta}t)}{\sqrt{-\Delta}}f_2^\omega + \mathcal C([-T_\omega,T_\omega], H^\sigma )\; . $$ \end{theorem}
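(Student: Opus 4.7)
The plan is to split any candidate solution as $u = u_L + v$, where
\[
u_L(t,x) = \cos(\sqrt{-\Delta}\,t)\,f_1^\omega + \frac{\sin(\sqrt{-\Delta}\,t)}{\sqrt{-\Delta}}\,f_2^\omega
\]
is the linear wave evolution of the randomized data, and $v$ is the nonlinear remainder. Then $v$ satisfies
\[
(\partial_t^2 - \Delta)\,v = -(u_L+v)^3 = -u_L^3 - 3 u_L^2 v - 3 u_L v^2 - v^3, \qquad v(0)=\partial_t v(0) = 0.
\]
Because $v$ starts at zero, one can hope to place $v$ in a strong space $X_T \subset \mathcal C([-T,T], H^\sigma)$ with $\sigma \geq \frac{1}{2}$, on which the deterministic linear theory (energy plus Strichartz estimates for the wave equation on the unit ball of $\R^3$) is comfortable, and treat $u_L$ only through space-time integrability.

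The first step is to extract the probabilistic gain: even though $f^\omega \in H^s \times H^{s-1}$ has only $s \geq \frac{8}{21} < \frac{1}{2}$, the Gaussian randomization in the radial eigenbasis $(e_n)$ gives, almost surely, $u_L \in L^p([-T,T] \times B^3)$ for every finite $p$, together with quantitative bounds on $\|u_L\|_{L^p_{t,x}}$ via Khintchine-type/Gaussian concentration inequalities combined with the $L^p$-estimates of the radial Dirichlet eigenfunctions. This is the mechanism that converts the subcritical regularity of $f^\omega$ into the super-Sobolev integrability needed to make sense of $u_L^3$ as a source term.

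The second step is a contraction argument for the Duhamel operator
\[
\Phi(v)(t) = -\int_0^t \frac{\sin(\sqrt{-\Delta}(t-s))}{\sqrt{-\Delta}}\,(u_L+v)^3(s)\,ds
\]
in a ball of $X_T = \mathcal C([-T,T], H^\sigma) \cap L^q_t W^{\sigma,r}_x$, with $(q,r)$ a Strichartz pair adapted to the 3D wave equation with Dirichlet boundary conditions on $B^3$. The mixed nonlinear terms $u_L^2 v$, $u_L v^2$, $v^3$ are estimated by Hölder inequality, distributing derivatives onto $v$ (which has them) and integrability onto $u_L$ (which has it almost surely by the previous step) together with Sobolev embeddings at regularity $\sigma$. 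Choosing $T_\omega$ small enough, depending on $\omega$ through $\|u_L\|_{L^p_{t,x}}$, shrinks these contributions and makes $\Phi$ a contraction; uniqueness inside $u_L + \mathcal C([-T_\omega,T_\omega], H^\sigma)$ is then built into the fixed-point argument.

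The main obstacle is the purely random cubic term $u_L^3$, the only piece not involving $v$ at all: its smoothness is dictated entirely by the rough random data, and one has to show that its Duhamel integral really gains enough regularity to land in $H^\sigma$ with $\sigma \geq \frac{1}{2}$. This is where the geometry of the ball, the radial reduction (which improves the $L^p$ behavior of the $e_n$), and the Strichartz estimates on $B^3$ are used together with the Gaussian integrability to gain the half-derivative that deterministic Sobolev embedding is unable to give at $s \geq \frac{8}{21}$. Once this smoothing for $u_L^3$ is established, the rest of the argument is a standard contraction, and the almost-sure finiteness of $\|u_L\|_{L^p_{t,x}}$ gives the almost-sure lower bound on $T_\omega$.
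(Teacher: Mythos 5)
This theorem is not proved in the paper under review: it is quoted verbatim from Burq and Tzvetkov \cite{wavbt} as part of the background, so there is no ``paper's proof'' to compare against. What can be assessed is whether your sketch correctly reconstructs the Burq--Tzvetkov argument, and at the level of strategy it does. The decomposition $u = u_L + v$ with $u_L$ the free evolution of the randomized data and $v$ starting from zero, the Duhamel fixed-point formulation for $v$, the use of Gaussian concentration to promote the randomized free evolution into $L^p_{t,x}$ for large $p$ despite $s < 1/2$, and the identification of the purely random source term $u_L^3$ as the crux --- all of this is the right skeleton.

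Two points deserve sharpening. First, the probabilistic gain is not a generic consequence of randomization in an eigenbasis; it hinges on the specific fact that the radial Dirichlet eigenfunctions $e_n(r) = \frac{\sin(n\pi r)}{\sqrt{\pi}\, r}$ on $B^3$ have $L^p$ norms growing much more slowly than Sogge's bound for generic eigenfunctions would allow, and your sketch mentions this only in passing. Without these radial eigenfunction estimates the Khintchine--Gaussian machinery does not produce the needed $L^{3p'}_{t,x}$ control on $u_L$, and the threshold $s \geq \frac{8}{21}$ --- which is the actual output of balancing these estimates against the deterministic Strichartz budget --- would be unexplained. Second, your description of the treatment of $u_L^3$ as ``gaining the half-derivative'' is slightly misleading: the mechanism is not a smoothing estimate that puts derivatives on $u_L^3$, but rather that the wave Duhamel operator maps the dual Strichartz space $L^{p'}_{t,x}$ (no derivatives required on the input) into $\mathcal C_t H^\sigma_x$ with $\sigma \geq 1/2$, and the probabilistic bound supplies $u_L^3 \in L^{p'}_{t,x}$ directly. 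As a roadmap your proposal is faithful to \cite{wavbt}, but it remains a roadmap; the content of the theorem lives in the eigenfunction bounds and the exponent bookkeeping that you defer.
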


The second one is dedicated to the global extension of these solutions (with Dirichlet boundary conditions). It states :

\begin{theorem}Fix $p\in ]4,6[$. Let $f_1^\omega$ and $f_2^\omega$ be :

$$f_1^\omega = \sum_n \frac{h_n(\omega)}{n\pi} e_n \; , \; f_2^\omega = \sum_n l_n(\omega) e_n \; .$$

Then, for all $s< \frac{1}{2}$ and almost all $\omega \in \Omega$, the problem \reff{threednwe} has a unique global solution in 

$$\mathcal C( \R_t , H^s) \cap L^p_{loc}(\R_t,L^p) \; .$$\end{theorem}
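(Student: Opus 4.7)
The strategy I would follow is Bourgain's invariant-measure globalization argument, adapted to the cubic NLW on the ball. The previous local theorem already yields, for almost every $\omega$, a local solution on some interval $[-T_\omega, T_\omega]$ of the form $\cos(t\sqrt{-\Delta})f_1^\omega + \frac{\sin(t\sqrt{-\Delta})}{\sqrt{-\Delta}}f_2^\omega + v$ with $v \in \mathcal C([-T_\omega,T_\omega], H^\sigma)$ for some $\sigma \geq 1/2$; the remaining task is to bootstrap to all of $\R$ and to propagate the $L^p_{t,x}$ integrability.

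Step one is a Galerkin-type approximation that leaves the initial data intact and projects only the nonlinearity:
$$(\partial_t^2 - \Delta) u_N + \Pi_N \bigl((\Pi_N u_N)^3\bigr) = 0, \quad (u_N,\partial_t u_N)|_{t=0} = (f_1^\omega, f_2^\omega),$$
where $\Pi_N$ is the orthogonal projector onto $E_N$. The system decouples into a free wave evolution on $E_N^\perp$ and a finite-dimensional Hamiltonian system on $E_N$ with conserved energy
$$\mathcal H_N(u,\partial_t u) = \tfrac{1}{2}\|\nabla \Pi_N u\|_{L^2}^2 + \tfrac{1}{2}\|\Pi_N \partial_t u\|_{L^2}^2 + \tfrac{1}{4}\|\Pi_N u\|_{L^4}^4,$$
so $u_N$ exists globally. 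Liouville on $E_N$ combined with the conservation of $\mathcal H_N$ then gives invariance of the Gibbs-type measure $d\rho_N = Z_N^{-1} e^{-\frac{1}{4}\|\Pi_N u\|_{L^4}^4} d\mu$, regarded as a measure on the full infinite-dimensional phase space.

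Step two is a uniform-in-$N$ tail bound, the heart of Bourgain's scheme. For a discretization $\{t_k\}$ of $[-T,T]$, invariance together with the Gaussian tail of $\mu$ gives $\rho_N(\|u_N(t_k)\|_{H^s} > R) = \rho_N(\|u_0\|_{H^s} > R) \lesssim e^{-cR^2}$, and a short-time continuity estimate for the Galerkin flow promotes the discrete supremum to a continuous one, yielding
$$\rho_N\bigl(\{\sup_{|t|\leq T} \|u_N(t)\|_{H^s} > R\}\bigr) \leq C(T)\, e^{-cR^2}.$$
In parallel, Khintchine-type inequalities for Gaussian series together with the explicit $L^p$ bounds on the radial Dirichlet eigenfunctions $e_n(r) = \sqrt{2}\sin(\pi n r)/r$ yield the almost-sure finiteness of the linear Strichartz-type norm $\bigl\|\cos(t\sqrt{-\Delta})f_1^\omega + \frac{\sin(t\sqrt{-\Delta})}{\sqrt{-\Delta}}f_2^\omega\bigr\|_{L^p([-T,T]\times B^3)}$, which is precisely the quantity controlling the perturbation $v$ in the local theory.

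Step three passes to the limit $N\to\infty$. Stability of the local Cauchy problem under perturbation of the nonlinearity forces $u_N \to u$ locally uniformly on the interval of existence of $u$, and lower semicontinuity transfers the uniform $H^s$ bound from $u_N$ to $u$, ruling out blow-up. A Borel--Cantelli argument over a doubly-indexed family $R_k, T_k \to \infty$ then produces a $\mu$-full (equivalently $\rho$-full) set of initial data on which the solution extends to all of $\R$ and lies in $\mathcal C(\R_t, H^s) \cap L^p_{loc}(\R_t, L^p)$. I expect the main obstacle to be this approximation step: establishing uniform-in-$N$ control on $(\Pi_N u_N)^3 - u^3$ on the local existence interval in a norm compatible with the Strichartz framework of the local theory. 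Once this is in hand, the measure-theoretic bootstrap is essentially routine.
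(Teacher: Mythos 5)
Your overall architecture — project only the nonlinearity so the initial data is left untouched, use Liouville on $E_N$ plus conservation of the truncated energy to make a Gibbs-type measure invariant, deduce uniform-in-$N$ large-deviation tail bounds, and close with a Borel--Cantelli globalization — is exactly the Bourgain-style scheme that Burq--Tzvetkov follow in \cite{wavbt} and that this paper summarizes; the statement here is not reproved but imported, so your proposal is being compared against that sketch.

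There is, however, one concrete gap: you truncate with the \emph{orthogonal} spectral projector $\Pi_N$, and this is precisely the operator that fails on $B^3$ in the range of exponents the local theory needs. On the unit ball with radial Dirichlet eigenfunctions $e_n(r)\propto \sin(n\pi r)/r$, the measure $r^2\,dr$ turns the Fourier--Bessel partial sums into weighted one-dimensional Fourier partial sums with weight $r^{2-p}$, which is not an $A_p$ weight for $p\in\,]4,6[$. Hence $\sup_N\|\Pi_N\|_{L^p\to L^p}=\infty$ in exactly the Strichartz range $p\in\,]4,6[$ used to control the nonlinearity, and your Step three --- ``stability of the local Cauchy problem under perturbation of the nonlinearity'' and the estimate of $(\Pi_N u_N)^3-u^3$ in $L^{p'}_{t,x}$ --- cannot be closed. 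This is not an incidental technicality but the reason the paper (following \cite{wavbt}) replaces $\Pi_N$ by the smooth cut-off $S_N=\chi(-\Delta/N^2)$, for which one has $\|S_N f\|_{L^p}\le C\|f\|_{L^p}$ uniformly in $N$ and $S_N f\to f$ in $L^p$; these two properties are what make the commutator and tail estimates ($\|(1-S_N)\cdot\|$ small, $\|S_N\cdot\|_{L^p}$ bounded) go through in the uniform-convergence lemma. You correctly flag the approximation step as ``the main obstacle''; the resolution is to use $S_N$ throughout in place of $\Pi_N$ (noting that $S_N$ still maps into $E_N$, commutes with $S(t)$, and is self-adjoint, so the Hamiltonian/Liouville structure and the invariance of $\rho_N$ survive). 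With that substitution the rest of your outline matches the intended argument.
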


To prove this theorem, a cutoff on the frequencies of the Laplace-Beltrami operator is used. The idea is to solve the equation on finite dimensional functional spaces, spanned by the $N$ first eigenfunctions of the Laplacian. Then, by taking limits of the finite dimensional solutions, a subset of $\Omega$ of full measure appears into which the norms of the local solutions with initial data of the form $f_0(\omega,\; .\; ),f_1(\omega,\; . \; )$ are controlled as the limits of finite dimensional solutions whose norms are themselves controlled. Therefore, this set of full measure provides a set of functions such that the local solution can be extended. The way these sets of control at finite times are built will inspire the construction of other sets onto which not the flow is strongly globally defined but also onto which the measure that we will define is invariant under this flow.

\smallskip

Before going further, the way the problem is reduced to a finite dimensional one will be described, as the definitions involved shall prove themselves useful for the sequel.

\begin{definition} Let $\chi $ be a $\mathcal C_c^\infty$ function with support included in $ [-1,1]$ and satisfying

$$\chi \equiv 1$$
on $[\frac{-1}{2}, \frac{1}{2}]$. Then, for all $N\in \N$ we call $S_N$ the operator $\chi( \frac{-\Delta}{N^2})$ that is to say the operator that maps 

$$\sum_n c_n e_n $$
to

$$\sum_n c_n \chi(\frac{n^2}{N^2}) e_n \; .$$

The set linearly spanned by $\lbrace e_n \; | \; n\leq N \rbrace$ is now called $E_N$ and $\Pi_N$ is the orthogonal projection on $E_N$.
\end{definition}

\begin{proposition}The operators $S_N$ are uniformly continuous from $L^p$ to $E_N$ normed by $L^p$, that is to say that there exists a constant $C$ independant from $N$ such that for all $f\in L^p$,

$$||S_N f||_{L^p} \leq C ||f||_{L^p} \; .$$

Also, for all $f\in L^p$, the sequence $(S_N f)_N$ converges towards $f$ in $L^p$.
\end{proposition}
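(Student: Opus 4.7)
The plan is to reduce the claim to a one-dimensional statement via the radial structure, obtain the uniform $L^p$ bound from a wave-equation representation of $S_N$, and conclude the convergence by a density argument.

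Since the radial Dirichlet eigenfunctions are $e_n(r) = \sin(n\pi r)/(\sqrt{2\pi}\,r)$, I set $g(r) := r f(r)$, extended to an odd $2$-periodic function on $\R$ (antisymmetry about $r=1$ being automatic from the Dirichlet condition). In this representation $S_N$ becomes the Fourier multiplier on the sine series with symbol $n \mapsto \chi(n^2/N^2)$. Writing the even function $\mu \mapsto \chi(\mu^2)$ as a cosine transform $\chi(\mu^2) = \int \Psi(t)\cos(\mu t)\,dt$ with $\Psi$ Schwartz, and combining with d'Alembert's formula for the $3$D radial wave equation (which, once $g$ is extended as above, reads $r\cos(t\sqrt{-\Delta})f(r) = \tfrac{1}{2}[g(r+t)+g(r-t)]$), one obtains
$$r\, (S_N f)(r) = \frac{N}{2}\int \Psi(Nt)\,[g(r+t) + g(r-t)]\,dt,$$
so that $rS_N f$ is the convolution of $g$ with an $L^1$-normalised Schwartz bump $\phi_N(t) = N\Psi(Nt)$ of width $1/N$ (modulo an even symmetrisation). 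Young's inequality on the torus then gives the uniform one-dimensional estimate $||r\, S_N f||_{L^p([-1,1])} \leq C\, ||rf||_{L^p([-1,1])}$ with $C = ||\Psi||_{L^1}$ independent of $N$.

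To pass from this to the three-dimensional bound $||S_N f||_{L^p(B^3)} \leq C ||f||_{L^p(B^3)}$ (with measure $r^2\,dr$), I split $[0,1]$ into the bulk $r \in [2/N, 1 - 2/N]$ and two boundary strips. On the bulk, the locality $|t|\lesssim 1/N$ of the wave kernel forces $r \pm t \sim r$, so the weights $r^{2-p}$ and $(r \pm t)^{2-p}$ are uniformly comparable and Young's inequality transfers directly to the weighted estimate $\int_{2/N}^{1-2/N} |S_N f|^p r^2\,dr \lesssim \int_0^1 |f|^p r^2\,dr$. On each boundary strip, I would use the explicit kernel $K_N$ of $S_N$ (computable via Poisson summation from $\sum_n \chi(n^2/N^2) e_n(x)e_n(y)$) together with its natural estimate $\sup_{|x|\leq 2/N} ||K_N(x,\cdot)||_{L^{p'}(B^3)} \lesssim N^{3/p}$, combined with the small measure $\int_0^{2/N} r^2\,dr \sim N^{-3}$ of the strip, to control its contribution by $C||f||_{L^p}^p$; the symmetric region $r \geq 1-2/N$ is handled identically. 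For the convergence, the uniform bound reduces matters by a standard $\varepsilon/3$-argument to any dense subspace of $L^p$; for a finite linear combination $f = \sum_{n\leq M} c_n e_n$ one has $S_N f = f$ as soon as $N \geq \sqrt{2}\,M$ (since $\chi \equiv 1$ on $[-1/2,1/2]$), and such sums are dense in $L^p(B^3)$ via smooth-radial approximation.

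The main difficulty lies in this transfer from the natural one-dimensional unweighted bound for $rf$ to the three-dimensional one weighted by $r^2\,dr$: the weight $r^{2-p}$ fails the Muckenhoupt $A_p$ condition for $p \geq 3$, so no direct weighted-multiplier theorem applies, and the endpoint regions must be handled by hand using the vanishing of $g$ at $0$ and $1$ together with the locality at scale $1/N$ of the wave kernel.
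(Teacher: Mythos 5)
The paper itself gives no proof of this proposition: it only refers the reader to \cite{strich}, so there is no in-text argument against which to compare. Your proposal is a genuine, essentially correct proof along the lines of the wave-propagator representation of spectral multipliers that the cited reference uses, but organized differently, and it is worth discussing the trade-offs.

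Your route is: reduce to one dimension via $g=rf$ and the radial d'Alembert formula, so that the smooth cutoff $\chi(-\Delta/N^2)$ becomes convolution of the odd $2$-periodic extension of $g$ with a Schwartz approximate identity $\phi_N(t)=N\Psi(Nt)$; Young's inequality gives a clean unweighted $1$D bound $\|rS_Nf\|_{L^p(dr)}\lesssim\|rf\|_{L^p(dr)}$; then you re-introduce the weight $r^2\,dr$ by splitting into a bulk $[2/N,1-2/N]$, where the locality of $\phi_N$ makes $r$ and $r\pm t$ comparable, and two boundary strips, where you invoke the pointwise kernel bound $\sup_{x\lesssim 1/N}\|K_N(x,\cdot)\|_{L^{p'}(B^3)}\lesssim N^{3/p}$ together with the $\sim N^{-3}$ measure of the strip. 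The two ingredients you flag (the $1/N$-locality of the wave kernel and the vanishing of $g$ at $0$ and $1$, equivalently the odd reflection there) are exactly what make the endpoint regions harmless, and your diagnosis that $r^{2-p}$ is not an $A_p$ weight for $p\geq 3$ correctly explains why a general weighted-multiplier theorem cannot do the job for you. Two small caveats you should keep in mind: in the bulk you must still control the Schwartz tails $|t|>r/2$, which is not literally covered by "$r\pm t\sim r$" but follows from the rapid decay of $\Psi$; and near $r=0$ the bound $|K_N|\lesssim N^3$ really uses the cancellation $\phi_N(r-s)-\phi_N(r+s)$ coming from the odd reflection (individually each term diverges like $N^2/(rs)$), so the reflection is essential, not cosmetic. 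The convergence part is fine: uniform boundedness plus $S_Nf=f$ for $f\in E_M$ once $N\geq\sqrt 2\,M$, plus density of finite linear combinations of the $e_n$ in $L^p(B^3)$, gives the result by the standard $\varepsilon/3$ argument.

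A more direct route, and the one that \cite{strich}-type references essentially follow, is to work in three dimensions throughout: the same cosine-transform representation $\chi(\mu^2)=\int\Psi(t)\cos(\mu t)\,dt$ together with finite speed of propagation and stationary phase (or, on the ball, the explicit reflected kernel) yields the pointwise estimate $|K_N(x,y)|\lesssim_M N^3(1+N|x-y|)^{-M}$ on $B^3$, from which $\sup_x\int_{B^3}|K_N(x,y)|\,dy\lesssim 1$ follows at once and Schur's test gives the uniform $L^p\to L^p$ bound for all $1\leq p\leq\infty$ simultaneously, with no weight transfer to carry out. Your $1$D reduction is perfectly valid and arguably more elementary in spirit (it avoids any kernel computation until the boundary strips), but it buys you the Young's-inequality step at the cost of having to undo the change of measure by hand; the direct $3$D kernel argument spends its effort on the kernel bound and gets the full range of $p$ for free. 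Either way, the content is the same decay-at-scale-$1/N$ property of $\chi(-\Delta/N^2)$ coming from the smoothness and compact support of $\chi$.
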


The proof of this proposition can be found in \cite{strich}.

\smallskip

The reduced problem in finite dimension becomes : 

\begin{equation}\label{findim} \left \lbrace{ \begin{tabular}{ll}
$i\partial_t u + (-\Delta)^{-1/2} u +S_N((S_N \Re u)^3) = 0$ \\
$u|_{t=0} = u_0 = f_1 + i(-\Delta)^{-1/2} f_1 $ \end{tabular} }\right. \; . \end{equation}

This should be explained in the next subsection.

\subsection{Approximation of the flow by finite dimensional problems}

First, one should see how the equation \reff{findim} is derived from the non linear wave equation on the unit ball.

\paragraph{Conserved quantities}

The initial equation is :

\begin{equation}\label{tot} \left \lbrace{ \begin{tabular}{ll}
$\partial_t^2 f -\Delta_{B^3} f +f^3 = 0$ & $t,r \in \R \times B^3$ \\
$f|_{t=0} = f_0 $ & $(\partial_t f)|_{t=0} = f_1$ \end{tabular} }\right. \end{equation}
where $B^3$ is the unit ball of $\R^3$ and $\Delta_{B^3}$ is the Laplace-Beltrami operator on $B^3$ with Dirichlet boundary conditions.

\smallskip

Now, by setting $H = \sqrt{ -\Delta_{B^3}}$, $u_0 = f_0 -i H^{-1} f_1$ and $u= f- iH^{-1}\partial_t f$, $u$ satifies :

\begin{equation}\label{hamilb}  \left \lbrace{ \begin{tabular}{ll}
$i\partial_t u + H u + H^{-1} (\Re u)^3 = 0$ \\
$u|_{t=0} = u_0$ \end{tabular} } \right. \end{equation}

\begin{proposition} The equation \reff{hamilb} is a Hamiltonian equation with energy :

$$\E (u) = \frac{1}{2} \int_{B^3} |H u|^2(r) r^2 dr + \frac{1}{4} \int_{B^3} |\Re u|^4 r^2dr \; .$$
\end{proposition}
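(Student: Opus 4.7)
The plan is to reduce the claim to the classical Hamiltonian structure of the real wave equation \reff{tot} via the change of variables $u\leftrightarrow(f,\partial_t f)$ given by $u = f-iH^{-1}\partial_t f$, i.e.\ $f = \Re u$ and $\partial_t f = -H\Im u$. Since $H = \sqrt{-\Delta_{B^3}}$ is a positive self-adjoint operator that maps real functions to real functions, it commutes with taking real/imaginary parts, so
$$|Hu|^2 = (H\Re u)^2 + (H\Im u)^2 = (Hf)^2 + (\partial_t f)^2.$$
Combined with $\int_{B^3}(Hf)^2 r^2 dr = \int_{B^3}|\nabla f|^2 r^2 dr$ (spectral calculus / integration by parts with Dirichlet conditions), this rewrites
$$\mathcal E(u) = \frac{1}{2}\int_{B^3}(\partial_t f)^2 r^2 dr + \frac{1}{2}\int_{B^3}|\nabla f|^2 r^2 dr + \frac{1}{4}\int_{B^3}f^4 r^2 dr,$$
which is the standard conserved Hamiltonian of the real cubic wave equation with respect to the canonical symplectic form $df\wedge d(\partial_t f)$.

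Next, I would check that the complex equation \reff{hamilb} is indeed the image of \reff{tot} under this change of variables, and record the resulting symplectic structure on the $u$-side. Splitting \reff{hamilb} into its real and imaginary parts, one obtains
$$\Re\partial_t u = -H\Im u, \qquad \Im\partial_t u = H\Re u + H^{-1}(\Re u)^3,$$
and substituting $f=\Re u$, $\partial_t f=-H\Im u$ gives back $\partial_t^2 f - \Delta_{B^3}f + f^3 = 0$, as required. The symplectic form transferred from the canonical one is the $H$-weighted form $\omega(v_1,v_2) = \Im\int_{B^3} (H\bar v_1)\,v_2\, r^2 dr$, and with this form the equation \reff{hamilb} is the Hamilton equation associated with $\mathcal E$.

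Finally, for the dynamical content (i.e.\ conservation of $\mathcal E$ along the flow), one can either invoke it directly from the $(f,\partial_t f)$-picture or verify it by hand in the $u$-variables: differentiating $\mathcal E(u(t))$ and using the above expressions for $\Re\partial_t u$ and $\Im\partial_t u$, the kinetic cross terms cancel by self-adjointness of $H$ (since $\int (H\Re u)(H^2\Im u)r^2 dr = \int (H^2\Re u)(H\Im u)r^2 dr$), and the remaining quantity $\int (H\Im u)(\Re u)^3 r^2 dr$ cancels the derivative of the quartic term, which equals $\int(\Re u)^3(-H\Im u)r^2 dr$. The only real obstacle is regularity: the computation is legitimate as soon as $Hu\in L^2$ and $\Re u\in L^4$, which is automatic for classical (or finite-dimensional Galerkin) solutions; for the rough $H^\sigma$-solutions with $\sigma<1/2$ that are the actual object of study, conservation will eventually be obtained by passing to the limit in the approximations \reff{findim}, as developed in the next subsection.
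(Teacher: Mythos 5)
The paper offers no proof of this proposition; it is asserted as a standard fact and the exposition moves directly to introducing $S_N$. Your argument is therefore not comparable to anything in the text, but it is a correct and complete verification, and it is the natural one. Three ingredients are needed and you supply all of them: (i) the change of variables $u = f - iH^{-1}\partial_t f$, equivalently $f = \Re u$, $\partial_t f = -H\Im u$, together with the identity $|Hu|^2 = (Hf)^2 + (\partial_t f)^2$ (valid because $H$ preserves real-valuedness) and $\int_{B^3}(Hf)^2 r^2\,dr = \int_{B^3}|\nabla f|^2 r^2\,dr$ from Dirichlet integration by parts, which turns $\mathcal E$ into the standard wave energy; (ii) the real/imaginary-part computation showing \reff{hamilb} is precisely \reff{tot} in the new variable, so that the Hamiltonian structure is transferred; and (iii) the direct $u$-side check that $\frac{d}{dt}\mathcal E(u(t))=0$, where the cross terms $\pm\int (H\Re u)(H^2\Im u)\,r^2dr$ vanish by self-adjointness of $H$ and the nonlinear contributions $\int(H\Im u)(\Re u)^3\,r^2dr$ and $\int(\Re u)^3(-H\Im u)\,r^2dr$ cancel. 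The identification of the transferred symplectic form as $\omega(v_1,v_2)=\Im\int_{B^3}(H\bar v_1)v_2\,r^2dr$ is also right, modulo the usual sign convention for canonical coordinates (with the convention $\omega_{\mathrm{can}}=\int df\wedge d(\partial_t f)$ and $p=\partial_t f=-H\Im u$ one gets the opposite sign, which is immaterial). Your closing remark on regularity is an appropriate caveat: the computation is rigorous for smooth or Galerkin-truncated data, which is exactly the setting in which the paper later uses the energy $\mathcal E_N$, while at the $H^\sigma$, $\sigma<1/2$, level conservation is only recovered through the approximation scheme.
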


The operator $S_N$ is then introduced in order to reduce the problem into an almost finite dimensional one.

\begin{definition} Set $\E_N$ the quantity : 

$$\E_N (u) = \frac{1}{2} \int_{B^3} |H u|^2 r^2dr + \frac{1}{4} |S_N \Re u |^4 r^2 dr \; .$$

This quantity is the hamiltonian of the equation

$$i\partial_t u + H u + S_N( H^{-1} (S_N \Re u)^3) = 0 \; .$$
\end{definition}

\begin{proposition}\label{cauchyprob}Set $u_0 \in H^\sigma$ with $\sigma < \frac{1}{2}$ and $S(t) = e^{iH t}$ the flow of the linear equation $i\partial_t u + Hu =0$ and consider the equation : 

\begin{equation}\label{reduced} \left \lbrace{ \begin{tabular}{ll}
$i\partial_t v + H v + S_N H^{-1}((S_N (S(t)u_0 +v))^3)$ \\
$v|_{t=0} = 0 $ \end{tabular} } \right. \; . \end{equation}

There exists a global strong solution called $v_N$.

Furthermore, $u_N = S(t) u_0 + v_N$ satisfies 

$$i\partial_t u_N + H u_N + S_N( H^{-1} (S_N \Re u_N)^3) = 0 $$
with initial data $u_0$. The flow of this equation is written $u_N(t)=\psi_N (t)(u_0)$.
\end{proposition}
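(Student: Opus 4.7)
The plan is to reduce \reff{reduced} to a finite-dimensional ODE on $E_N$, solve it locally via Cauchy--Lipschitz, and use conservation of the Hamiltonian $\mathcal{E}_N$ to prevent blow-up.

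The first observation is that any solution $v$ of \reff{reduced} automatically lies in $E_N$. Indeed, the range of $S_N$ is contained in $E_N$, and $S_N$ commutes with $H$ (both being functions of $-\Delta$), so applying $I-\Pi_N$ to \reff{reduced} yields $i\partial_t(I-\Pi_N) v + H(I-\Pi_N) v = 0$ with zero initial data, forcing $(I-\Pi_N) v \equiv 0$. Restricted to $E_N$, the equation is a system of $N$ ODEs for the coordinates of $v$ in the basis $(e_n)_{n \leq N}$; the nonlinearity is polynomial (cubic) in $v$, and the time-dependent source $\Pi_N S(t) u_0$ is continuous in $t$ with values in the finite-dimensional $E_N$ and bounded uniformly in $t$ (since $S(t)$ is unitary on $L^2$ and $\Pi_N$ is a bounded finite-rank projector). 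Cauchy--Lipschitz therefore produces a unique maximal solution on some interval $]-T_*,T^*[$.

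To upgrade this to a global solution, set $u_N = S(t) u_0 + v$. Using $i\partial_t S(t)u_0 + H S(t) u_0 = 0$, a direct substitution gives $i\partial_t u_N + H u_N + S_N H^{-1}((S_N \Re u_N)^3) = 0$ with $u_N|_{t=0} = u_0$. Decompose $u_N = u_N^> + u_N^<$ with $u_N^< = \Pi_N u_N$; the same projection argument shows $u_N^>(t) = S(t)(I-\Pi_N) u_0$, while $u_N^< = \Pi_N S(t) u_0 + v$ satisfies the Hamiltonian flow on $E_N$ generated by $\mathcal{E}_N$. A direct computation of $\frac{d}{dt}\mathcal{E}_N(u_N^<)$, splitting $u_N^<$ into real and imaginary parts and exploiting the self-adjointness of $H$ and of the real multiplier $S_N$, shows that the contribution of the $Hu_N^<$ term and the contribution of the nonlinearity cancel exactly; hence $\mathcal{E}_N(u_N^<)$ is conserved. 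Since $\mathcal{E}_N(u_N^<) \geq \tfrac{1}{2}\|Hu_N^<\|_{L^2}^2$ and all norms are equivalent on the finite-dimensional space $E_N$, this a priori bound rules out blow-up and provides $v_N = u_N^< - \Pi_N S(t) u_0 \in E_N$ globally defined.

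The only step requiring a genuine computation is the conservation of $\mathcal{E}_N(u_N^<)$; everything else is standard Cauchy--Lipschitz on the finite-dimensional space $E_N$ combined with the algebraic identity relating $v$ and $u_N$.
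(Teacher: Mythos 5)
Your argument is correct and matches the paper's own reasoning: reduce to a finite-dimensional ODE on $E_N$ via $(I-\Pi_N)S_N = 0$, invoke Cauchy--Lipschitz for local existence, and use conservation of $\E_N(\Pi_N S(t)u_0 + v) = \E_N(u_N^<)$ to bound $v$ on $E_N$ and rule out blow-up. You supply more detail (the explicit decomposition $u_N = u_N^> + u_N^<$ and the cancellation in $\tfrac{d}{dt}\E_N(u_N^<)$) than the paper's brief sketch, but the underlying strategy is the same.
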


The equation (on $v$) $i\partial_t v + H v + S_N H^{-1}((S_N (S(t)u_0 +v))^3)$ is on $E_N$ and thus the Cauchy-Lipschitz theorem holds and shows that it admits a local unique solution for any initial condition $v_0$, and with $u_0$ fixed. Then, the quantity $\E_N(\Pi_N S(t)u_0 + v)$ does not depend on time and controls $v$, which implies that the local solution does not explode and therefore the solution $v_N$ is global.

\subsection{Building invariant measures}

Now, invariant measures under the flows $\psi_N$ are built. First, call $e_n(r) = \frac{\sin n\pi r}{\sqrt \pi r}$ the eigenfunctions of the Laplacian with Dirichlet boundary conditions. Then, let $\Omega, P$ be a probability space and $(g_n)_n$ a sequence of independant centered and normalized gaussian variables. Set : 

$$\varphi_N (\omega,r) = \sum_{n=1}^N \frac{ g_n(\omega)}{n\pi}e_n(r) \; .$$

The image measure $\mu_N$ of $\varphi_N$ on $E_N$ is absolutely continuous wrt the Lebesgue measure on $E_N$ and : 

$$ d\mu_N(\sum_{n=1}^N (a_n+ib_n e_n)) = d_N \prod_{n=1}^N e^{-(n\pi)^2(a_n^2+b_n^2)/2}da_ndb_n$$

$$ = d_N e^{-\int_{B^3}|H\sum (a_n+ib_n)e_n|^2} \prod_{n=1}^N da_n db_n$$
where $d_N$ is a normalization factor.

\smallskip

Thanks to this point of view, it appears that $\mu_N$ is invariant under the flot $S(t)$ on $E_N$. Indeed, by Liouville theorem, the Lebesgue measure on $E_N$ is invariant under the flow and the quantity $\frac{1}{2} \int |Hu|^2$ is invariant under $S(t)$.

\smallskip

Furthermore, the sequence $\varphi_N$ converges in $L^2_\Omega, H^\sigma_r$ for all $\sigma < \frac{1}{2}$. Denote its limit by $\varphi$ and call $\mu$ the image measure on $H^\sigma$ of $\omega \mapsto \varphi (\omega, .)$.

\bigskip

Also, considering the measure $\mu_N^\bot$ on $E_N^\bot$ (the orthogonal being taken in $H^\sigma$) such that

$$\mu = \mu_N \otimes \mu_N^\bot \; ,$$
it comes that on $E_N^\bot$, $\mu_N^\bot$ is the image measure of 

$$\varphi^N : \omega \mapsto \sum_{n=N+1}^\infty \frac{g_n(\omega)}{n\pi} e_n \; .$$

\begin{lemma} Let $U$ be an open (for the trace topology of $H^\sigma$ on $E_M^\bot$) set of  $E_M^\bot$ and call $\mu_N^M$ the image measure of 

$$\varphi_N^M : \omega \mapsto \sum_{n=M+1}^N \frac{g_n(\omega)}{n\pi}e_n$$
on $E_N^M$ the space linearly spanned by $\lbrace e_{M+1},\hdots , e_N \rbrace$, such that $\mu_M^\bot = \mu_M^N \otimes \mu_N^\bot$. It comes,

$$\mu_M^\bot(U) \leq \liminf_{N \rightarrow \infty} \mu_N^M(U\cap E_N^M) \; .$$

In particular, for $M=0$, this leads to, for all open set $U$ of $H^\sigma$,

$$\mu(U)\leq \liminf_{N\rightarrow \infty} \mu_N (U\cap E_N) \; .$$
\end{lemma}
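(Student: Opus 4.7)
The plan is to recognise the inequality as an instance of the Portmanteau characterisation of weak convergence of probability measures on a Polish space: if $\nu_N$ converges weakly to $\nu$, then $\nu(U) \leq \liminf_N \nu_N(U)$ for every open $U$. I would apply this with $\nu_N = \mu_N^M$ and $\nu = \mu_M^\bot$, viewed as probability measures on the closed subspace $E_M^\bot \subset H^\sigma$ equipped with its trace topology, which is itself Hilbertian and hence Polish.

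The key reduction is therefore the weak convergence $\mu_N^M \to \mu_M^\bot$. Since the two measures are defined as laws of the random series
\[
\varphi^M(\omega) = \sum_{n=M+1}^\infty \frac{g_n(\omega)}{n\pi}e_n, \qquad \varphi_N^M(\omega) = \sum_{n=M+1}^N \frac{g_n(\omega)}{n\pi}e_n,
\]
it is enough to prove $\varphi_N^M \to \varphi^M$ in distribution in $E_M^\bot$. By orthogonality,
\[
\mathbb{E}\bigl[\|\varphi^M - \varphi_N^M\|_{H^\sigma}^2\bigr] = \sum_{n=N+1}^\infty \frac{\|e_n\|_{H^\sigma}^2}{(n\pi)^2},
\]
which is a tail of a convergent series under the standing assumption $\sigma < \tfrac12$ (this is exactly the estimate already used in the excerpt to see that $\varphi_N \to \varphi$ in $L^2_\Omega H^\sigma$). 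Thus $\varphi_N^M \to \varphi^M$ in $L^2_\Omega H^\sigma$, hence in probability in $H^\sigma$, hence in law, and the Portmanteau theorem gives
\[
\mu_M^\bot(U) \leq \liminf_{N\to\infty}\mu_N^M(U)
\]
for every open $U \subset E_M^\bot$.

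To finish, I would observe that $\mu_N^M$ is concentrated on $E_N^M \subset E_M^\bot$ by construction, so $\mu_N^M(U) = \mu_N^M(U \cap E_N^M)$, and the preceding inequality becomes precisely the claim of the lemma. The special case $M = 0$ is then immediate from the identifications $\mu_0^\bot = \mu$ and $E_0^\bot = H^\sigma$. I do not anticipate a genuine obstacle; the only point deserving explicit verification is that the Portmanteau argument is applied on $E_M^\bot$ with the trace topology (so that openness of $U$ is the right notion), which is legitimate since all the measures in play live there.
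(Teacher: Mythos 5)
Your proof is correct, but it takes a different route from the paper's. The paper proves the inequality from scratch via a Fatou argument: using that $\|\varphi_N^M(\omega) - \varphi^M(\omega)\|_{H^\sigma} \leq N^{\sigma-\sigma_1}\|\varphi(\omega)\|_{H^{\sigma_1}}$ with $\sigma < \sigma_1 < \tfrac12$ and that $\|\varphi(\omega)\|_{H^{\sigma_1}}$ is almost surely finite, the author shows that for a.e.\ $\omega$ in the preimage $A = (\varphi^M)^{-1}(U)$ one eventually has $\omega \in A_N = (\varphi_N^M)^{-1}(U\cap E_N^M)$, i.e.\ $A \subseteq \liminf A_N$ up to a null set, and then concludes by Fatou's lemma applied to the indicators. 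You instead upgrade the $L^2_\Omega H^\sigma$ convergence of $\varphi_N^M$ to convergence in probability, hence in distribution, and then invoke the Portmanteau theorem on the Polish space $E_M^\bot$. Both are valid. Your approach is softer in that it does not need the pointwise rate $N^{\sigma-\sigma_1}$ or a.s.\ convergence — $L^2$ convergence of the laws already suffices — at the cost of citing Portmanteau as a black box. The paper's argument is more self-contained (Fatou only) and, incidentally, establishes the stronger fact of a.s.\ convergence of $\varphi_N^M$ to $\varphi^M$, which is in the same spirit as estimates used elsewhere in the paper. Your observation that $\mu_N^M$ is concentrated on $E_N^M$, so $\mu_N^M(U) = \mu_N^M(U\cap E_N^M)$, correctly recovers the precise statement, and your remark about working in the trace topology on $E_M^\bot$ is exactly the right caution.
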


\begin{proof} Let $\sigma<\sigma_1< \frac{1}{2}$. Let $A$ be the set of $\Omega $, $A= (\varphi^M)^{-1}(U)$ and $A_N = (\varphi^M_N)^{-1}(U\cap E_N^M)$.

\smallskip

If $A$ is empty, then $\mu_M^\bot (U) = 0 = \mu_N^M (U\cap E_N^M)$.

\smallskip

If not, let $\omega \in A$. Since $U$ is an open set, there exists a ball of radius $\epsilon > 0$ such that $\varphi^M(\omega) + B_\epsilon\cap E_M^\bot \subseteq U$. Also,

$$||\varphi_N^M(\omega) - \varphi^M(\omega) ||_{H^\sigma}\leq N^{\sigma-\sigma_1} ||\varphi(\omega)||_{H^{\sigma_1}} .$$

The norm $||\varphi||_{L^2_\omega, H^{\sigma_1}}$ being finite, fot almost all $\omega$, the $||\varphi(\omega)||_{H^{\sigma_1}}$ is finite. So, for almost all $\omega \in A$, there exists $N_0\geq 0$ such that for all $N\geq N_0$, $\varphi_N^M(\omega) \in \varphi^M(\omega)+B_\epsilon \cap E_M^\bot \subseteq U$,  as $\varphi_N^M -\varphi^M (\omega) \in E_M^\bot$, that is there exists $N_0 $ such that for all $N\geq N_0$

$$\omega \in A_N\; \mbox{ that is to say } \omega\in \liminf A_N \; .$$

So, $A \subseteq \liminf A_N$ with the possible exception of a negligible set. By Fatou lemma,

$$\mu_M^\bot(U) = P((\varphi^M)^{-1}(U)) = P(A) \leq P(\liminf A_N) \leq \liminf P(A_N) = \liminf \mu_N^M(U\cap E_N)\; .$$
\end{proof}

\begin{remark}For all closed set $F$ of $E_M^\bot$, 

$$ \mu_M^\bot(F) \geq \limsup \mu_N^M(F\cap E_N^M) \; .$$
\end{remark}

\begin{proposition} The measures $\mu_M^\bot$ are invariant under the flow $S(t)|_{E_M^\bot}$. Therefore, with $M=0$, $\mu$ is invariant under $S(t)$.\end{proposition}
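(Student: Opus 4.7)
The plan is to lift the (straightforward) finite-dimensional invariance of $\mu_N^M$ to the limit through the open-set inequality established in the preceding lemma.

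First I would check the finite-dimensional invariance. The space $E_N^M$ is stable under $S(t) = e^{iHt}$ because each $e_n$ is an eigenfunction of $H$ with eigenvalue $n\pi$; on the real Euclidean space $E_N^M$ of real dimension $2(N-M)$, $S(t)$ acts as a direct sum of planar rotations and is in particular orthogonal with Jacobian $1$. The density of $\mu_N^M$ against Lebesgue measure on $E_N^M$ is a function of $\|Hu\|_{L^2}^2$, and since $S(t)$ commutes with $H$ and is $L^2$-unitary, this quadratic form is preserved. Combined with Liouville's theorem, this gives invariance of $\mu_N^M$ under $S(t)|_{E_N^M}$.

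Next I would promote the open-set convergence of the previous lemma to weak convergence on $E_M^\bot$. Viewed as probability measures on the Polish space $E_M^\bot$ (equipped with the trace topology of $H^\sigma$), each $\mu_N^M$ is supported on $E_N^M$ and satisfies $\liminf_N \mu_N^M(U) \geq \mu_M^\bot(U)$ for every open $U$. Since all measures involved are probability measures, by complementation this is equivalent to the closed-set bound of the accompanying remark and to the Portmanteau characterisation of weak convergence $\mu_N^M \to \mu_M^\bot$. In particular, for every bounded continuous $f : E_M^\bot \to \R$,
$$\int f \, d\mu_N^M \; \underset{N \to \infty}{\longrightarrow} \; \int f \, d\mu_M^\bot.$$

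To conclude, I would note that $S(t)$ is an isometry of $H^\sigma$ leaving $E_M^\bot$ stable, so $f \circ S(t)$ is again bounded and continuous. Applying the previous convergence to both $f$ and $f \circ S(t)$ and inserting the finite-dimensional invariance in between,
$$\int f \circ S(t) \, d\mu_M^\bot = \lim_{N\to\infty} \int f \circ S(t) \, d\mu_N^M = \lim_{N\to\infty} \int f \, d\mu_N^M = \int f \, d\mu_M^\bot.$$
Since this equality holds for every bounded continuous test function, $\mu_M^\bot$ is invariant under $S(t)|_{E_M^\bot}$, and the case $M=0$ yields the invariance of $\mu$. The only delicate point is the passage from the open-set bound of the lemma to the convergence of integrals against bounded continuous functions, which is a standard Portmanteau-type argument — the genuine work having already been carried out in proving the lemma itself.
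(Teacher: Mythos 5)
Your proof is correct, and it takes a genuinely different route from the paper's. Both arguments rest on the same two ingredients: the invariance of the finite-dimensional marginals $\mu_N^M$ under $S(t)|_{E_N^M}$ (for which your Liouville-plus-Gaussian-density verification matches the paper), and the open-set inequality of the preceding lemma. Where you diverge is in how the lemma is exploited. The paper stays at the level of sets: it works with a closed $F$, introduces the $\epsilon$-fattenings $F + B_\epsilon^M$ and $F + \overline{B_\epsilon^M}$, chains the closed-set bound from the remark with the open-set bound from the lemma around the finite-dimensional invariance, sends $\epsilon \to 0$ by dominated convergence to get $\mu_M^\bot(S(t)F) \geq \mu_M^\bot(F)$, and then invokes the reversibility $S(-t) = S(t)^{-1}$ to obtain the reverse inequality before extending from closed sets to arbitrary measurable sets. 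You instead observe that the lemma is precisely one of the equivalent conditions in the Portmanteau theorem, so that $\mu_N^M \to \mu_M^\bot$ weakly on $E_M^\bot$, and then test against bounded continuous functions; since $S(t)$ is a topological self-map of $E_M^\bot$, precomposition preserves the test class and the equality of integrals falls out in one line, with no fattening and no separate appeal to reversibility (which is absorbed into the fact that $S(t)_*\mu_M^\bot = \mu_M^\bot$ holds for all $t$ of both signs). What your approach buys is brevity and the recognition that the lemma is already the substantive work; what the paper's approach buys is self-containedness, in that it never needs to cite the Portmanteau theorem or the fact that Borel probability measures on a metric space are determined by their integrals against bounded continuous functions. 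Both are sound.
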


\begin{proof} Let $F$ be a closed set of $E_M^\bot$ and for all $\epsilon > 0$, call $B_\epsilon^M=B_\epsilon \cap E_M^\bot$ with $B_\epsilon$ the open ball of $H^\sigma$ of radius $\epsilon$. For all $t\in \R$, $S(t)$ is a linear isometry of $H^\sigma$ and $E_M^\bot$ is invariant under $S(t)$. Thus, as $F+\overline{B_\epsilon^M}$ is a closed set of $E_M^\bot$, $S(t)F + \overline{B_\epsilon^M} = S(t)(F+\overline{B_\epsilon^M})$ is also closed : 

$$\mu_M^\bot(S(t) F +\overline{B_\epsilon^M}) = \mu(S(t)(F+\overline{B_\epsilon^M}))\geq \limsup \mu_N^M(S(t)(F+\overline{B_\epsilon})\cap E_N^M )$$
and as $S(t)A \cap E_N^M = S(t)(A\cap E_N^M)$,

$$\mu_M^\bot(S(t) F +\overline{B_\epsilon^M})\geq \limsup \mu_N^M (S(t)(F+\overline{B_\epsilon^M}\cap E_N^M))\; .$$

Then, $\mu_N^M$ is invariant under the flow $S(t)|_{E_N^M}$ for the same reasons as $\mu_N$, so

$$\mu_M^\bot(S(t) F +\overline{B_\epsilon^M})\geq \limsup \mu_N^M(F+\overline{B_\epsilon^M}\cap E_N^M) \geq \liminf \mu_N^M(F+\overline{B_\epsilon^M}\cap E_N^M)$$

$$\mu_M^\bot(S(t) F +\overline{B_\epsilon^M})\geq \liminf \mu_N^M(F+B_\epsilon^M \cap E_N^M)\; .$$

As $F+B_\epsilon^M$ is open in $E_M^\bot$,

$$\mu_M^\bot (S(t) F +\overline{B_\epsilon^M})\geq \mu_M^\bot (F+B_\epsilon^M) \geq \mu_M^\bot (F)$$
and by the dominated convergence theorem when $\epsilon \rightarrow 0$,

$$\mu_M^\bot(S(t)F) \geq \mu_M^\bot (F)\; .$$

The linear equation is reversible on all $E_M^\bot$ and $S(t)F$ is closed so, 

$$\mu_M^\bot(F) = \mu_M^\bot (S(-t)S(t)F) \geq \mu_M^\bot(S(t)F)$$
which gives

$$\mu_M^\bot (F) = \mu_M^\bot (S(t)F)$$
for all time $t$ and all closed set $F$.

\smallskip

Then, again because $S(t)$ is an isometry on $E_M^\bot $ and thus preserves the topology, this equality is stable under the passage to the complementary and to denombrable union. Therefore, this property is true for all measurable set $A$ and all time $t$.\end{proof}

\smallskip

As the quantity $\frac{1}{4} \int_{B^3} |S_N \Re u|^4 $ is $\mu$ almost surely finite (see \cite{wavbt}) the measure 

$$d\rho_N (u) = e^{- \frac{1}{4} \int_{B^3}|S_N \Re u|^4}d\mu(u)$$
is well-defined on all $H^\sigma$.

\begin{proposition}\label{fininv} The measure $\rho_N$ is invariant under the flow $\psi_N$ globally defined in proposition \reff{cauchyprob} $\psi_N(t) : H^\sigma \rightarrow H^\sigma$. \end{proposition}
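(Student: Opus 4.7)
The plan is to decompose the flow $\psi_N$ into an interacting finite-dimensional piece on $E_N$ and a purely linear piece on $E_N^\bot$, and then factor the measure $\rho_N$ accordingly. The decomposition works because $S_N = \chi(-\Delta/N^2)$ annihilates $E_N^\bot$: if we write $u = u^{(N)} + u^{(N,\bot)}$ with $u^{(N)} = \Pi_N u$, then $S_N \Re u = S_N \Re u^{(N)}$, so the nonlinear term $S_N H^{-1}((S_N \Re u)^3)$ lives in $E_N$. Since $H$ and $\Pi_N$ commute, the equation splits: $u^{(N,\bot)}$ satisfies the linear equation $i\partial_t u^{(N,\bot)} + H u^{(N,\bot)} = 0$, while $u^{(N)}$ satisfies the finite-dimensional equation $i\partial_t v + Hv + S_N H^{-1}((S_N \Re v)^3)=0$ on $E_N$. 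In short, $\psi_N(t) = \tilde\psi_N(t) \otimes S(t)|_{E_N^\bot}$.

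Next, I would factor $\rho_N$. Using $\mu = \mu_N \otimes \mu_N^\bot$ and the fact that the density $e^{-\frac{1}{4}\int|S_N\Re u|^4}$ depends only on $u^{(N)}$, one obtains $d\rho_N = d\tilde\rho_N(v) \otimes d\mu_N^\bot(w)$, where on $E_N$ one has
$$d\tilde\rho_N(v) = e^{-\frac{1}{4}\int_{B^3}|S_N \Re v|^4}\,d\mu_N(v) = d_N\, e^{-\tilde\E_N(v)}\,dv,$$
with $\tilde\E_N(v) = \frac{1}{2}\int|Hv|^2 + \frac{1}{4}\int|S_N\Re v|^4$ and $dv$ real Lebesgue measure on $E_N$ (viewed as a real vector space of dimension $2N$ via the decomposition of coefficients into real and imaginary parts).

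The invariance of each factor is then standard. On $E_N$, the equation is a Hamiltonian ODE with Hamiltonian $\tilde\E_N$, so Liouville's theorem gives invariance of Lebesgue measure under $\tilde\psi_N(t)$, and conservation of $\tilde\E_N$ along the flow gives invariance of the density $e^{-\tilde\E_N}$. Hence $\tilde\rho_N$ is invariant under $\tilde\psi_N$. On $E_N^\bot$, the already proved invariance of $\mu_M^\bot$ under $S(t)|_{E_M^\bot}$, applied with $M = N$, gives the invariance of $\mu_N^\bot$ under $S(t)|_{E_N^\bot}$. Combining the two factorizations yields invariance of $\rho_N$ under $\psi_N$.

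The only mildly delicate point is verifying the Hamiltonian/Liouville step cleanly: one must check that the vector field defining $\tilde\psi_N$ on $E_N$ (written in the real coordinates $(a_n,b_n)_{n\le N}$ used already for $\mu_N$) is divergence-free and that $\tilde\E_N$ is truly conserved. Both follow from the canonical Hamiltonian form of the complex equation $i\partial_t v = -\partial_{\bar v}\tilde\E_N$ on the symplectic real vector space $E_N$, so this is structural rather than computational. Once this is in place, everything else is just reading off the product structure of $\rho_N$ from the product structure of $\psi_N$.
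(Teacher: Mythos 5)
Your proposal is correct and follows essentially the same route as the paper: decompose $\psi_N(t)$ as a product of the finite-dimensional Hamiltonian flow on $E_N$ and the linear flow $S(t)$ on $E_N^\bot$ (using that $S_N$ annihilates $E_N^\bot$), factor $\rho_N$ as $\tilde\rho_N\otimes\mu_N^\bot$, and conclude via Liouville's theorem plus conservation of $\E_N$ on $E_N$ together with the already-established invariance of $\mu_N^\bot$ under $S(t)$. You spell out the annihilation of $E_N^\bot$ by $S_N$ and the symplectic structure underpinning Liouville a bit more explicitly, but the argument is the same.
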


\begin{proof}Consider a measurable set $A$ of initial data $u_0$. For each $u_0$ in $A$, we can write : 

$$u_0 = \Pi_N u_0 + \Pi_N^\bot u_0$$
where $\Pi_N^\bot$ is the orthonormal projector (in $H^\sigma$) on $E_N^\bot$. It suffices to consider $A$ of product type, that is of the type : 

$$A = \lbrace u_0 \; | \; \Pi_N u_0 \in B \; ,\; \Pi_N^\bot u_0 \in C \rbrace$$
with $B$ and $C$ measurable sets of respectively $E_N$ and $E_N^\bot$ since the topology (and so the measurable sets) of $H^\sigma$ is the same as the one of the cartesian product $E_N \times E_N^\bot$. 

\smallskip

Therefore, 

$$\psi_N(t)u_0 = S(t)\Pi_N u_0 + S(t) \Pi_N^\bot u_0 + v(t) = \psi_N|_{E_N}(t) (\Pi_N u_0) + S(t)|_{E_N^\bot} \Pi_N^\bot u_0$$ 

and thus

$$\psi_N(t)(A) = \psi_N|_{E_N}(t) (B) \times S(t)_{E_N^\bot} (C) \; .$$

So, the invariance of $\rho_N$ under $\psi_N$ is reduced to the invariance of $\mu_N^\bot$ under $S(t)$ and the invariance of $e^{- \frac{1}{4} \int_{B^3}|S_N \Re u|^4}d\mu_N(u)$ (on $E_N$) under $ \psi_N|_{E_N}$. The first invariance has already been dealt with. For the second one, all $B\subseteq E_N$ measurable satisfies: 

$$\int_{\psi_N|_{E_N}(t)(B)} e^{-\frac{1}{4} \int_{B^3}|S_N \Re u|^4}d\mu_N (u)=\int_{\psi_N|_{E_N}(t)(B)} e^{-\frac{1}{2}\int_{B^3}|Hu|^2 -\frac{1}{4} \int_{B^3}|S_N \Re u|^4}dL_N (u)$$

$$=\int_{\psi_N|_{E_N}(t)(B)} e^{-\E_N(u)}dL_N(u)$$
where $L_N$ is the Lebesgue measure on $E_N$. By Liouville theorem,  $L_N$ is invariant under $\psi_N|_{E_N}$, therefore the following change of variable $u=\psi_N|_{E_N}(t)(w)$ holds : 

$$\int_{\psi_N|_{E_N}(t)(B)} e^{-\frac{1}{4} \int_{B^3}|S_N \Re u|^4}d\mu_N (u) = \int_{B} e^{\E_N(\psi_N|_{E_N}(t)(w))}dL_N(w)\; .$$

Then, remarking that on $E_N$, $\E_N(\psi_N|_{E_N}(t)(w))$ can be derived over $t$ and is equal to $\E_N(w)$, the measure is invariant and so, $\rho_N$ is invariant under $\psi_N$.

\end{proof}

\begin{definition}Let $f_N$ and $f$ be the application on $H^\sigma$ defined as : 

$$f_N (u)= e^{-\frac{1}{4}\int_{B^3} |S_N \Re u |^4} \mbox{ and } f(u) = e^{-\frac{1}{4}\int_{B^3}|\Re u|^4} \; .$$
\end{definition}

The following statement comes from the analysis of \cite{lestrois}.

\begin{proposition}\label{lunconv} The quantity 

$$\frac{1}{4} \int_{B^3}|\Re u|^4$$
is finite for $\mu$-almost all $u\in H^\sigma$. 

Besides, $f_N$ converges towards $f$ in $L^1_\mu$ norm.
\end{proposition}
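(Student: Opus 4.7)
I would prove the two assertions in sequence: first the $\mu$-almost sure finiteness of $\int_{B^3}|\Re u|^4$, then the $L^1_\mu$ convergence $f_N\to f$, deducing the second from the first by pointwise convergence plus dominated convergence. The bound $0\le f_N,f\le 1$ together with $\mu$ being a probability measure is what makes the $L^1$ step essentially painless once the pointwise statement is in hand.

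For the finiteness claim, the point is that under $\mu$ the function $u$ is distributed as the Gaussian series $\sum_n \frac{g_n}{n\pi}e_n$, so for each fixed $r\in B^3$ the random variable $(\Re u)(r)$ is a centered real Gaussian with variance $\sigma(r)^2=\sum_n\frac{|e_n(r)|^2}{2(n\pi)^2}$. Using Tonelli and the Gaussian identity $\mathbb{E}[X^4]=3\sigma^4$ one obtains
\[
\int_{H^\sigma}\int_{B^3}|\Re u(r)|^4\,r^2\,dr\,d\mu(u)=3\int_{B^3}\sigma(r)^4\,r^2\,dr.
\]
Since $e_n(r)=\sin(n\pi r)/(\sqrt{\pi}\,r)$, the series defining $\sigma(r)^2$ converges and exploiting the cancellations in $\sin(n\pi r)$ near $r=0$ one checks that $r\mapsto \sigma(r)^2 r$ is bounded on $B^3$, so the right-hand side above is finite. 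Hence $\int_{B^3}|\Re u|^4<\infty$ for $\mu$-almost every $u$. In fact this is exactly the integrability already established in \cite{wavbt}, which can simply be invoked.

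For the $L^1_\mu$ convergence, let $u\in H^\sigma$ be such that $\Re u\in L^4(B^3)$; by the first part this holds $\mu$-almost surely. The proposition on $S_N$ recorded above asserts that $S_N g\to g$ in $L^4$ for every $g\in L^4(B^3)$, so $\|S_N\Re u\|_{L^4}^4\to\|\Re u\|_{L^4}^4$. By continuity of $x\mapsto e^{-x/4}$,
\[
f_N(u)=e^{-\frac14\|S_N\Re u\|_{L^4}^4}\longrightarrow e^{-\frac14\|\Re u\|_{L^4}^4}=f(u)
\qquad (\mu\text{-a.e.}).
\]
Since $0\le f_N(u),f(u)\le 1$ and the constant $1$ is in $L^1_\mu$ (as $\mu$ is a probability measure), Lebesgue's dominated convergence theorem applied to $|f_N-f|\le 1$ yields $\|f_N-f\|_{L^1_\mu}\to 0$.

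No real obstacle arises: the only nontrivial piece is the quantitative integrability $\mathbb{E}_\mu\|\Re u\|_{L^4}^4<\infty$, which I would either compute directly as above or borrow from \cite{wavbt}. Everything else reduces to the $L^p$-continuity and $L^p$-convergence properties of $S_N$ already stated in the preceding proposition, and to a standard dominated convergence argument made painless by the uniform bound $f_N,f\in[0,1]$.
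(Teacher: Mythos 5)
The paper does not give its own proof of this proposition; it simply refers the reader to \cite{wavbt} for the a.s.\ finiteness and to \cite{lestrois} for the $L^1_\mu$ convergence. Your argument supplies a correct, self-contained proof of exactly the facts being cited, and it follows the same route those references take: a Gaussian moment computation for the first claim, and $L^p$-convergence of $S_N$ plus dominated convergence for the second.

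Both steps check out. For the finiteness, with the complex normalization $\mathbb E|g_n|^2=1$ one indeed has $\Re u(r)\sim\mathcal N(0,\sigma(r)^2)$ with $\sigma(r)^2=\sum_n\frac{|e_n(r)|^2}{2(n\pi)^2}$, and Tonelli plus $\mathbb E[X^4]=3\sigma^4$ gives $\int\!\!\int|\Re u|^4r^2\,dr\,d\mu=3\int\sigma(r)^4r^2\,dr$. The key point you gesture at (``cancellations near $r=0$'') can be made entirely explicit: since $e_n(r)^2=\sin^2(n\pi r)/(\pi r^2)$ and $\sum_n\sin^2(n\pi r)/n^2=\tfrac{\pi^2}{2}r(1-r)$ for $r\in[0,1]$, one gets $\sigma(r)^2=\tfrac{1-r}{4\pi r}$, so $\sigma(r)^2 r$ is bounded and $\int_0^1\sigma^4r^2\,dr<\infty$ exactly as you assert. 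The $L^1$ step is then a clean application of the $L^4$-convergence $S_N g\to g$ (from the proposition on $S_N$ recorded earlier in the paper), continuity of $x\mapsto e^{-x/4}$, and dominated convergence with the uniform bound $0\le f_N,f\le1$ against the probability measure $\mu$. Nothing is missing; you have, if anything, been more explicit than the paper, which delegates both claims to the references.
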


Therefore, the measure $\rho$ can be introduced as :

\begin{proposition} The measure $\rho$ such that : 

$$d\rho(u) = f(u)d\mu(u)$$
is well defined and non trivial. And for all $A$ measurable, 

$$\rho(A) = \lim_{N\rightarrow \infty} \rho_N(A)\; .$$

\end{proposition}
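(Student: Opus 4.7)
The plan is to reduce everything to Proposition \ref{lunconv}, which supplies both the $\mu$-a.s. finiteness of $\int_{B^3}|\Re u|^4$ and the $L^1_\mu$-convergence $f_N \to f$. Everything else is a routine consequence of these two facts, so the proof should consist of three short verifications: well-definedness, non-triviality, and setwise convergence $\rho_N(A) \to \rho(A)$.

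First I would check that $\rho$ is well-defined and finite. The map $u \mapsto \int_{B^3}|\Re u|^4$ is measurable on $H^\sigma$ (for instance as an increasing limit of the continuous functionals $u \mapsto \int |S_N \Re u|^4$, using that $S_N \Re u \to \Re u$ in $L^4$, or directly via Fatou), so $f$ is measurable. Since $0 \le f \le 1$ pointwise, the formula $d\rho = f \, d\mu$ defines a finite Borel measure on $H^\sigma$ with total mass $\int f \, d\mu \le 1$. For non-triviality, Proposition \ref{lunconv} gives $\int|\Re u|^4 < \infty$ for $\mu$-almost every $u$, so $f(u) > 0$ on a set of full $\mu$-measure; hence $\rho(H^\sigma) = \int f\, d\mu > 0$.

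For the setwise convergence, I would simply estimate, for any measurable $A \subseteq H^\sigma$,
$$|\rho_N(A) - \rho(A)| = \left| \int_A (f_N - f)\, d\mu \right| \le \int_{H^\sigma} |f_N - f|\, d\mu = \|f_N - f\|_{L^1_\mu},$$
and the right-hand side tends to $0$ by the second assertion of Proposition \ref{lunconv}. In fact this shows uniform convergence over all measurable $A$, which is stronger than needed.

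There is no real obstacle here; the whole content of the proposition lies in Proposition \ref{lunconv}, which itself rests on the analysis of \cite{lestrois}. The only point requiring a word of care is the measurability of $f$ and the justification that $\int|\Re u|^4$ is a genuine $[0,+\infty]$-valued measurable function on $H^\sigma$ (rather than being defined only on a subset), but this is immediate from monotone convergence applied to $\int|S_N \Re u|^4$, together with the $\mu$-a.s.\ finiteness supplied by Proposition \ref{lunconv}.
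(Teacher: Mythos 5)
Your proposal is correct and takes essentially the same approach as the paper: the paper likewise reduces everything to Proposition \ref{lunconv} and then uses the one-line bound $|\rho(A)-\rho_N(A)|\le\int_A|f-f_N|\,d\mu\le\|f-f_N\|_{L^1_\mu}$, deferring the remaining details (measurability, non-triviality) to the analogous argument in \cite{lestrois}. Your write-up just makes those deferred points explicit.
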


The proof of the convergence is very similar to the one in the case of the defocusing NLS, as can be found in \cite{lestrois}, and then :

$$\rho(A) = \int_{A} f(u) d\mu(u) $$

$$|\rho(A) -\rho_N(A) | \leq \int_{A}|f(u)-f_N(u)|d\mu(u) \leq ||f-f_N||_{L^1_\omega} \; .$$

The fact that there exists a set of full $\rho$ measure onto which the flow of \reff{hamilb} is well-defined has been proved in \cite{wavbt}. 

\smallskip

Now, the fact that the measure $\rho$ is invariant under the flow shall be seen.

\section{Uniform convergence of the approached flows}

\subsection{Toolbox}

\paragraph{Sobolev embedding}

For a start, here is the fondamental Sobolev embedding theorem on $\R^n$.

\begin{theorem} Let $n\in \N$ and $s\in \R$. Set $p\in [2,\infty [$ such that $\frac{1}{2} = \frac{1}{p}+\frac{s}{n}$. The functional space $H^s(\R^n)$ is continuously embedded into $L^p(\R^n)$. That is to say, there exists a constant $C(s)$ such that for all $f\in H^s(\R^n)$,

$$||f||_{L^p} \leq C ||f||_{H^s} \; .$$
\end{theorem}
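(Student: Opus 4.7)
My plan is to reduce the statement to the Hardy--Littlewood--Sobolev inequality via the Bessel potential calculus. The scaling relation $\frac{1}{2} = \frac{1}{p} + \frac{s}{n}$ together with $p \in [2, \infty[$ forces $s \in [0, n/2[$; the value $s = 0$ is the trivial identity $H^0 = L^2$, so only $s \in \; ]0, n/2[$ needs genuine argument.

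First I would represent any $f \in H^s(\R^n)$ as $f = G_s * g$, where $g = (1-\Delta)^{s/2}f \in L^2(\R^n)$ satisfies $||g||_{L^2} = ||f||_{H^s}$ and $G_s$ is the Bessel kernel characterised by $\widehat{G_s}(\xi) = (1+|\xi|^2)^{-s/2}$. A classical computation yields the pointwise estimate $G_s(x) \leq C(|x|^{s-n} \chi_{\lbrace |x| \leq 1 \rbrace}(x) + e^{-|x|/2}\chi_{\lbrace |x|\geq 1 \rbrace}(x))$, so $|f(x)|$ is dominated by a constant times the Riesz potential $I_s(|g|)(x) = \int_{\R^n}|x-y|^{s-n}|g(y)|\, dy$, plus a convolution of $|g|$ with an $L^1$ function that Young's inequality handles trivially.

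The second step is the Hardy--Littlewood--Sobolev inequality itself: for $0 < s < n$ and $1 < q < p < \infty$ related by $\frac{1}{p} = \frac{1}{q} - \frac{s}{n}$, the operator $I_s$ is continuous from $L^q$ to $L^p$. Specialising to $q = 2$ gives exactly the exponent $\frac{1}{p} = \frac{1}{2} - \frac{s}{n}$ required by the hypothesis, and combining with the tail bound from the previous paragraph produces $||f||_{L^p} \leq C(s) ||f||_{H^s}$.

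The main obstacle is Hardy--Littlewood--Sobolev itself, whose standard proof goes through a weak-type $(1, n/(n-s))$ bound on $I_s$ followed by Marcinkiewicz interpolation (with the dual endpoint handled by symmetry), or alternatively through the symmetric decreasing rearrangement of F. Riesz. An equally valid self-contained route avoiding Bessel kernels entirely would use a Littlewood--Paley decomposition $f = \sum_k \Delta_k f$ together with Bernstein's inequality $||\Delta_k f||_{L^p} \lesssim 2^{kn(1/2 - 1/p)}||\Delta_k f||_{L^2} = 2^{ks}||\Delta_k f||_{L^2}$, and the $L^p$-boundedness of the Littlewood--Paley square function then closes the argument via Minkowski's inequality (valid since $p/2 \geq 1$) and the $\ell^2$-characterisation $||f||_{H^s}^2 \sim \sum_k 2^{2ks}||\Delta_k f||_{L^2}^2$. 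Since the paper uses this as a toolbox result and both ingredients are thoroughly classical, in practice I would simply cite a standard reference such as Stein's \emph{Singular integrals} rather than reproduce the full proof.
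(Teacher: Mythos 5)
The paper does not prove this embedding; it simply cites Aubin's book for the proof and moves on. Your proposal actually sketches two standard proofs: the first via the Bessel potential representation $f = G_s * g$ with $g = (1-\Delta)^{s/2} f \in L^2$, the pointwise kernel bound $G_s(x) \lesssim |x|^{s-n}$ near the origin with exponential decay at infinity, and the Hardy--Littlewood--Sobolev inequality at the exponent $\frac{1}{p} = \frac{1}{2} - \frac{s}{n}$; the second via Littlewood--Paley blocks, Bernstein's inequality $\|\Delta_k f\|_{L^p} \lesssim 2^{ks}\|\Delta_k f\|_{L^2}$, and the square-function characterisation of $L^p$ (the Minkowski step passing from $\|(\sum_k |\Delta_k f|^2)^{1/2}\|_{L^p}^2$ to $\sum_k \|\Delta_k f\|_{L^p}^2$ is legitimate precisely because $p \geq 2$). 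Both are correct and fill in what the paper delegates to the reference; your closing remark that you would in practice simply cite Stein mirrors what the paper actually does, so the two treatments are in the same spirit, with yours supplying more detail.
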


\begin{remark}By considering $f$ radial and with compact support on $B^3$ the unit ball in dimension $3$, as a particular case of the precedent theorem for all $f$ radial and with compact support on $B^3$ and in $H^s(\R^3)$, that is to say for all $f\in \Hs^s$, it comes :

$$||f||_{L^p(B^3)} \leq C ||f||_{H^s(B^3)}$$
as long as $\frac{1}{2} = \frac{1}{p}+\frac{s}{3}$.
\end{remark}

The proof of Sobolev embedding thorem can be found in \cite{aubin}.

\paragraph{Deep into the local existence of solution for the cubic NLW}

The goal here is to show that on certain sets, the flows $\psi_N$ converges uniformly towards $\psi$. So, first, determistic Strichartz estimates and needed properties of the flow are described.

\begin{definition}Let $p > 2$, $q$ such that $\frac{1}{p}+\frac{1}{q} = \frac{1}{2}$, $T>0$ and $s= \frac{2}{p}$. Call

$$X^s_T = \mathcal C^0([-T,T],H^s(B^3))\cap L^p((-T,T),L^q(B^3))$$
where $B^3$ is the unit ball in $\R^3$ and

$$Y^s_T = L^1([-T,T],H^{-s}(B^3))+L^{p'}((-T,T),L^{q'}(B^3))$$
its dual where $p'$ and $q'$ are the conjugate numbers of $p$ and $q$.
\end{definition}

\begin{proposition} Let $p\in ]4,6[$ and $s= \frac{3}{2}-\frac{4}{p}$. There exists a constant $C$ such that for all $T\in ]0,1]$ and all $f$,

$$||f||_{L^p([0,T],L^p(B^3))} \leq C ||f||_{X^s_T} \mbox{ and } ||f||_{Y^s_T} \leq C ||f||_{L^{p'}([-T,T]\times B^3)} \; .$$
\end{proposition}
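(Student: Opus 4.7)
The plan is to prove the first estimate by combining the Sobolev embedding in space with a single Hölder interpolation across the two components of $X^s_T$, and then to deduce the second estimate by a duality argument.

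\smallskip

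Write $(\tilde p, \tilde q)$ for the admissible pair used to define $X^s_T$, so that $\tfrac{1}{\tilde p} + \tfrac{1}{\tilde q} = \tfrac{1}{2}$ and $s = \tfrac{2}{\tilde p}$; substituting $s = \tfrac{3}{2} - \tfrac{4}{p}$ yields $\tilde p = \tfrac{4p}{3p-8}$ and $\tilde q = \tfrac{4p}{8-p}$, both in $]2,\infty[$ for $p \in ]4, 6[$. I then invoke the standard Sobolev embedding on the bounded domain $B^3$, namely $H^s(B^3) \hookrightarrow L^r(B^3)$ with $\tfrac{1}{r} = \tfrac{1}{2} - \tfrac{s}{3} = \tfrac{4}{3p}$, i.e.\ $r = \tfrac{3p}{4}$. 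Setting $\theta = \tfrac{\tilde p}{p} = \tfrac{4}{3p-8} \in ]0,1[$, a direct computation confirms that $\tfrac{1}{p} = \tfrac{\theta}{\tilde q} + \tfrac{1-\theta}{r}$; this identity, which the relation $s = \tfrac{3}{2} - \tfrac{4}{p}$ is precisely engineered to make hold, is the crux of the whole exponent arithmetic.

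\smallskip

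For any $f \in X^s_T$, Hölder's inequality in $x$ at each fixed time gives $\|f(t,\cdot)\|_{L^p(B^3)} \leq \|f(t,\cdot)\|_{L^{\tilde q}(B^3)}^{\theta}\,\|f(t,\cdot)\|_{L^r(B^3)}^{1-\theta}$. Raising to the $p$-th power, integrating in $t$, and using $\theta p = \tilde p$, one arrives at
\[
\|f\|_{L^p([0,T]\times B^3)} \leq \|f\|_{L^\infty_t L^r_x}^{1-\theta}\,\|f\|_{L^{\tilde p}_t L^{\tilde q}_x}^{\theta} \leq C\,\|f\|_{X^s_T},
\]
where the Sobolev embedding converts $L^r_x$ back into $H^s_x$ in the first factor, and the weighted AM--GM inequality $a^{1-\theta}b^{\theta} \leq a + b$ absorbs the split into a single $X^s_T$-norm. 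The constant is independent of $T \in ]0,1]$ since the argument uses only spatial Hölder and Fubini; no time-length factor enters.

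\smallskip

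For the second estimate, I invoke the identification $Y^s_T = (X^s_T)^*$ that is built into the definition, together with $L^{p'}([-T,T]\times B^3) = (L^p([-T,T]\times B^3))^*$. Given $g \in L^{p'}_{t,x}$, for every $f \in X^s_T$ one has by Hölder and the first estimate
\[
\Big|\int_{-T}^{T}\!\!\int_{B^3} f\, g \,dt\,dx \Big| \leq \|f\|_{L^p_{t,x}}\,\|g\|_{L^{p'}_{t,x}} \leq C\,\|f\|_{X^s_T}\,\|g\|_{L^{p'}_{t,x}},
\]
so $g$ defines a continuous linear form on $X^s_T$ of norm at most $C\|g\|_{L^{p'}_{t,x}}$, which the identification realises as an element of $Y^s_T$ of the same norm. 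The main obstacle is in making this duality step fully rigorous: the dual of $\mathcal{C}^0_t H^s_x$ is the space of $H^{-s}$-valued Radon measures, which is strictly larger than $L^1_t H^{-s}_x$, so matching the concrete sum-space $Y^s_T$ with the abstract dual requires a Hahn--Banach plus approximation argument to extract a representative in the announced form. Taking the paper's convention that $Y^s_T$ \emph{is} the dual of $X^s_T$ as given, the whole proposition reduces to the Sobolev-plus-Hölder bookkeeping above.
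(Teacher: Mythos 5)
Your proof of the first estimate is correct and takes the same route the paper alludes to: Sobolev embedding $H^s_x \hookrightarrow L^{r}_x$ with $r = 3p/4$, log-convexity of spatial $L^p$ norms at fixed $t$ via $\tfrac{1}{p} = \tfrac{\theta}{\tilde q} + \tfrac{1-\theta}{r}$ with $\theta = \tilde p / p$, then a sup bound on the $L^\infty_t L^r_x$ factor so that $\theta p = \tilde p$ closes the time integral; the constant is manifestly $T$-independent. The exponent arithmetic checks out.

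The second estimate has a genuine gap — the one you flag yourself — and the repair you sketch would not close it. Boundedness of $f \mapsto \int fg$ on $X^s_T$ does not give $g \in Y^s_T$: precisely because the dual of $\mathcal C^0_t H^s_x$ is the space of $H^{-s}_x$-valued Radon measures, $Y^s_T$ is a \emph{proper} subspace of $(X^s_T)^*$, and a bounded functional on $X^s_T$ need not admit a decomposition $g = g_1 + g_2$ with $g_1 \in L^1_t H^{-s}_x$, $g_2 \in L^{\tilde p'}_t L^{\tilde q'}_x$. Yet it is the infimum of $\|g_1\| + \|g_2\|$ over such decompositions that defines the $Y^s_T$-norm you must bound, so membership must be exhibited, not inferred. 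Hahn--Banach extends functionals to larger spaces and cannot manufacture a decomposition; nor will any density argument push a measure back into $L^1_t H^{-s}_x$. The correct path — and what the paper means by ``a particular case of interpolation between the two functional spaces described in the definition of $X^s_T$'' — is to dualize the \emph{ingredients} of your first proof rather than its conclusion. Sobolev dualizes to $L^{r'}_x \hookrightarrow H^{-s}_x$, hence $L^1_t L^{r'}_x \hookrightarrow L^1_t H^{-s}_x$; and your $\theta$ simultaneously satisfies $\tfrac{1}{p'} = \tfrac{\theta}{\tilde q'} + \tfrac{1-\theta}{r'}$ and $\tfrac{1}{p'} = (1-\theta)\cdot 1 + \tfrac{\theta}{\tilde p'}$, so $L^{p'}_t L^{p'}_x = \bigl[L^1_t L^{r'}_x,\, L^{\tilde p'}_t L^{\tilde q'}_x\bigr]_\theta$ by complex interpolation of mixed-norm Lebesgue spaces. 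Since the complex interpolation space embeds with constant $1$ (uniformly in $T$) into the sum $A + B$, this \emph{supplies} the decomposition and yields $L^{p'}_{t,x} \hookrightarrow L^1_t L^{r'}_x + L^{\tilde p'}_t L^{\tilde q'}_x \hookrightarrow Y^s_T$, which is the claim.
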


The proof comes from a particular case of interpolation between the two functional spaces described in the definition of $X^s_T$.

Thanks to a combination of Sobolev embedding theorem and Strichartz inequality (see \cite{kapit} for further details), the following property holds : 

\begin{proposition}\label{combi} Let $p \in ]4,6[$ and $s$ defined as $s=\frac{3}{2} - \frac{4}{p}$, there exists $C\leq 0$ such that for all $T \in [0,1]$ and all $f \in H^s$ : 

$$||S(t)f||_{L^p([-T,T]\times B^3)}\leq C ||f||_{H^s} \; .$$
\end{proposition}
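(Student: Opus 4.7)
The plan is to combine two ingredients already available in the toolbox: the embedding $X^s_T \hookrightarrow L^p([-T,T]\times B^3)$ from the preceding proposition (which encodes the Sobolev side of the argument), and a homogeneous Strichartz estimate for the half--wave propagator $S(t)=e^{iHt}$ on the radial, Dirichlet setting of $B^3$ (the contribution from \cite{kapit}).

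Concretely, I would apply the preceding proposition to the function $u(t,x) = (S(t)f)(x)$, which reduces the statement to proving the single bound
$$\|S(t)f\|_{X^s_T} \;=\; \|S(t)f\|_{C^0([-T,T],H^s)} + \|S(t)f\|_{L^p([-T,T],L^q(B^3))} \;\leq\; C\|f\|_{H^s}.$$
The first summand is immediate: since $H = \sqrt{-\Delta_{B^3}}$ is self--adjoint with Dirichlet boundary conditions, the family $S(t)=e^{iHt}$ is a unitary one--parameter group on every Sobolev space, so $\|S(t)f\|_{L^\infty([-T,T],H^s)} = \|f\|_{H^s}$.

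The second summand is the substantive one, and is where the Strichartz input genuinely enters. The scale--invariant Strichartz estimate for the half--wave equation at the admissible pair $(p,q)$ with $\frac{1}{p}+\frac{1}{q}=\frac{1}{2}$ gives a bound of the form
$$\|S(t)f\|_{L^p([-T,T],L^q(B^3))} \leq C\|f\|_{H^{2/p}}.$$
For $p\in ]4,6[$ one checks $s = \frac{3}{2}-\frac{4}{p} \geq \frac{2}{p}$, with equality only at the endpoint $p=4$, so $\|f\|_{H^{2/p}} \leq \|f\|_{H^s}$, and combining the two summands produces the estimate claimed. The small excess of regularity $s - 2/p \geq 0$ is precisely what is needed to absorb the Sobolev interpolation built into the $X^s_T \hookrightarrow L^p_{t,x}$ embedding of the previous proposition.

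The main obstacle is of course the Strichartz estimate itself. On a domain with boundary, the standard $\R^n$ proof does not apply directly because of reflection phenomena near $\partial B^3$; in the spherically symmetric setting, however, the half--wave flow reduces to a one--dimensional problem in the radial variable $r$ (with the explicit basis $e_n(r) = \sin(n\pi r)/(\sqrt{\pi}\,r)$), so the estimate can be obtained by essentially one--dimensional dispersive techniques — this is the content of \cite{kapit}. Granted that input, the remainder of the argument is just the Sobolev/Strichartz bookkeeping sketched above.
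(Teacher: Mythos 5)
Your proposal reproduces the paper's argument, which is given there in a single line: combine the $X^s_T \hookrightarrow L^p_{t,x}$ embedding of the preceding proposition with a Strichartz inequality for the radial half-wave propagator on $B^3$, the latter being the content of \cite{kapit}. One small bookkeeping point is worth flagging. In the paper's definition of $X^s_T$, the Lebesgue exponents are tied to the superscript by the relation $s = 2/p$, so for $s = \frac{3}{2} - \frac{4}{p}$ with $p \in ]4,6[$ the Lebesgue piece of $X^s_T$ is $L^{\tilde{p}}_t L^{\tilde{q}}_x$ with $\tilde{p} = 2/s$ (so $\tilde{p} < 4 < p$) and $\tilde{q} = 2/(1-s)$, \emph{not} $L^p_t L^q_x$ for the original $p$ and its conjugate $q$. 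With the correct pair $(\tilde{p},\tilde{q})$, which is still admissible ($1/\tilde{p}+1/\tilde{q}=1/2$), the Strichartz input from \cite{kapit} sits exactly at regularity $H^{2/\tilde{p}} = H^s$, so there is no excess regularity in that step; the ``excess'' $s - 2/p > 0$ that you invoke is rather what makes the interpolation behind the embedding $X^s_T \hookrightarrow L^p_{t,x}$ possible in the first place. Your decomposition still yields the correct conclusion, but it does not literally control the $X^s_T$ norm as the paper defines it --- a labeling slip rather than a conceptual error.
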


\begin{proposition}\label{ineq} Let $p\in ]4,6[$, $p_1\in ]4,6[$ such that $p_1 > p$, $s=\frac{3}{2}-\frac{4}{p}$ and $s_1 = \frac{3}{2}-\frac{4}{p_1} > s$. There exists $C$ such that for all $T\in ]0,1]$ and all $f$,

$$\int_{0}^t S(t-u)H^{-1}f(u) du||_{X^s_T}\leq C ||f||_{Y_T^{1-s}}$$

$$||(1-S_N)\int_{0}^t H^{-1} S(t-u) f(u) du ||_{X^s_T} \leq C N^{s-s_1} ||f||_{Y^{1-s_1}_T} \; .$$
\end{proposition}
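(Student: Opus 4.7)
The two inequalities both rest on the inhomogeneous Strichartz estimate for the half-wave equation $i\partial_t u + Hu = g$ with $u|_{t=0}=0$, applied respectively at regularity $s$ and at the higher regularity $s_1$. A key structural observation is that the operators $S(t)=e^{itH}$, $H^{-1}=(-\Delta_{B^3})^{-1/2}$ and $1-S_N$ are all spectral multipliers of $-\Delta_{B^3}$ and therefore commute pairwise; I would use this repeatedly to move $1-S_N$ freely across the Duhamel integral.

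For the first inequality, I would apply the standard $TT^*$ duality argument, combined with the Christ--Kiselev lemma to convert the non-retarded Duhamel operator into the retarded one $\int_0^t S(t-u)\,du$. This yields an abstract bound of the form
\[
\left\| \int_0^t S(t-u)\, g(u)\, du \right\|_{X^s_T} \leq C\, \|g\|_{Y^s_T}.
\]
Specializing to $g=H^{-1}f$, it suffices to show $\|H^{-1}f\|_{Y^s_T}\leq C\|f\|_{Y^{1-s}_T}$, which I would verify on each piece of the atomic decomposition $Y^{1-s}_T = L^1_T H^{s-1} + L^{p'}_T L^{q'}$: the one-order smoothing of $H^{-1}$ sends $L^1_T H^{s-1}$ into $L^1_T H^s \subseteq L^1_T H^{-s}$ (valid since $s\geq 0$), and $H^{-1}$ is bounded on $L^{p'}_T L^{q'}$ by elliptic regularity for $-\Delta_{B^3}$ on $B^3$ together with Sobolev embedding.

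For the second inequality, commutation gives
\[
(1-S_N)\int_0^t S(t-u)H^{-1}f(u)\,du = \int_0^t S(t-u)H^{-1}(1-S_N)f(u)\,du,
\]
so applying the first inequality at the higher regularity $s_1$ yields
\[
\left\|(1-S_N)\int_0^t S(t-u)H^{-1}f(u)\,du\right\|_{X^{s_1}_T} \leq C\,\|f\|_{Y^{1-s_1}_T},
\]
where one also uses that $1-S_N$ is uniformly bounded on $Y^{1-s_1}_T$, thanks to the $L^p$-boundedness of $S_N$ recalled above. The remaining step is to trade $X^{s_1}_T$ against $X^s_T$ at the cost of $N^{s-s_1}$: since $1-S_N$ is spectrally supported on eigenvalues $n\geq N/\sqrt{2}$, the pointwise multiplier bound $\langle n\rangle^s \leq N^{s-s_1}\langle n\rangle^{s_1}$ holds on its range, which gives the claimed gain on the $\mathcal{C}^0 H^s$ component of $X^s_T$ immediately and on the Strichartz component by combining the same spectral localization with Sobolev embedding.

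The main obstacle is precisely this last transfer $X^{s_1}_T \to X^s_T$ on the Strichartz component: one cannot invoke a sharp Littlewood--Paley projector on the ball $B^3$, because such projectors are not uniformly bounded on $L^q$. The smooth cut-off $S_N$ circumvents this, being uniformly bounded on all $L^p$ by the proposition recalled in the toolbox, so the frequency gain $N^{s-s_1}$ propagates through both components of $X^s_T$ without any extra polynomial or logarithmic loss in $N$.
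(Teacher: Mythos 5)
The paper does not give a proof of this proposition; it refers to Burq--Tzvetkov, so there is nothing to compare against line by line. Judged on its own merits, your sketch identifies the right ingredients (retarded Strichartz via $TT^*$ and Christ--Kiselev, commutation of spectral multipliers, uniform $L^p$-boundedness of $S_N$, high-frequency localization of $1-S_N$), but the way the smoothing operator $H^{-1}$ is handled introduces a genuine gap.

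The abstract bound $\|\int_0^t S(t-u)g\,du\|_{X^s_T}\leq C\|g\|_{Y^s_T}$ that you take as a starting point is false. Take $g$ supported near $t=0$ with values in $H^{-s}\setminus H^s$; then the Duhamel term lies only in $\mathcal C^0 H^{-s}$, not in $\mathcal C^0 H^s$, so the left side is infinite. The issue is that for the half-wave propagator the Strichartz estimates genuinely lose derivatives: the homogeneous estimate is $\|S(t)\phi\|_{L^{p_X}L^{q_X}}\lesssim\|\phi\|_{H^s}$, the dual estimate lands in $H^{-s}$, and the $TT^*$ composition needs a derivative gain of exactly one unit (the gap from $H^{s-1}$ to $H^s$ on the $L^1$ component, and from $H^{-(1-s)}$ to $H^s$ on the dual Strichartz component) to close. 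That unit of smoothing is precisely what $H^{-1}$ provides, and it must be spent \emph{inside} the Strichartz machinery, not beforehand. Your reduction $\|H^{-1}f\|_{Y^s_T}\lesssim\|f\|_{Y^{1-s}_T}$, and in particular the embedding $L^1H^s\subseteq L^1H^{-s}$, throws away this derivative; after the reduction there is nothing left to pay for the regularity gap in Strichartz, and the "abstract bound" that would finish the proof simply is not available. The correct organization is to commute $H^{-1}$ through the propagator and let it convert $f\in L^1H^{s-1}+L^{\tilde p'}L^{\tilde q'}$ (exponents admissible at regularity $1-s$) into an input that the \emph{mismatched} retarded Strichartz estimate at output regularity $s$ can digest.

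For the second inequality, the commutation $(1-S_N)\int_0^tS(t-u)H^{-1}f\,du=\int_0^tS(t-u)H^{-1}(1-S_N)f\,du$ and the uniform boundedness of $1-S_N$ are fine, and the gain $N^{s-s_1}$ on the $\mathcal C^0H^s$ piece is immediate from the spectral localization. But the transfer from $X^{s_1}_T$ to $X^s_T$ on the Strichartz component is not a Sobolev embedding. The admissible pairs $(p_X,q_X)$ and $(p_{X,1},q_{X,1})$ attached to $s$ and $s_1$ satisfy $p_{X,1}<p_X$, so the time-integrability improves in the \emph{wrong} direction and one cannot simply embed $L^{p_{X,1}}L^{q_{X,1}}$ into $L^{p_X}L^{q_X}$ on $[-T,T]\times B^3$. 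The clean route here is the same one that fixes the first inequality: apply the retarded Strichartz estimate directly with mismatched pairs (output regularity $s$, input regularity $1-s_1$). The smoothing required is only $1-(s_1-s)<1$, so $H^{-1}$ has an excess of $s_1-s$ derivatives, and it is precisely this excess, restricted to frequencies $\gtrsim N$ by $1-S_N$, that produces the factor $N^{s-s_1}$ on \emph{both} components of $X^s_T$ simultaneously. You should rework both parts along these lines rather than isolating the action of $H^{-1}$ from the Strichartz inequality.
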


The proof can be found in \cite{wavbt}. The last crucial result needed from this article is the local existence theorem, and its implication regarding the $X^s_T$ norms of the function $v(t)=\psi(t)u_0 - S(t) u_0$ where $\psi(t)$ would be defined as the flow of 

$$i\partial_t u + Hu + H^{-1} (\Re u)^3 = 0$$
that is to say $v$ is the solution of

\begin{equation}\label{limofred} \left \lbrace{\begin{tabular}{ll}
$i\partial_t v + H v + H^{-1}((\Re(S(t)u_0 + v(t)))^3) = 0$ \\
$v|_{t=0} = 0$ \end{tabular} }\right. \; . \end{equation}

\begin{theorem}\label{locex} Choose a real number $p \in ]4,6[$ and define $s$ as $s=\frac{3}{2}-\frac{4}{p}$. There exists $C> 0$, $c>0$, $\gamma = 1-\frac{4}{p}$ such that for any arbitrary large number $A$, there exists a time of existence $\tau \in ]0,1]$ depending on $A$ as $\tau = c(1+A)^{-\gamma}$ such that for all initial data $u_0$ satisfying $||S(t)u_0||_{L^p_{t,x}\in [0,2]\times B^3} \leq A$, there exist unique solutions of the equations \reff{reduced} and \reff{limofred}, $v_N$ and $v$, belonging to $X^s_\tau$ and satisfying : 

$$||v||_{X^s_\tau },||v_N||_{X^s_\tau} \leq C A \; .$$

Also, as $S(t)$ is $2$ periodic (the eigenvalues of the Laplacian on $B^3$ with Dirichlet boundary conditions are of the form $(n\pi)^2$, $n\in \N^*$) and thanks to the proposition \reff{combi}, there exists another constant $C'$ such that for each $t \in [-\tau, \tau]$ :

$$||S(t')(u(t))||_{L^p_{t',x}},||S(t')(u_N(t))||_{L^p_{t',x}} \leq ||S(2t')u_0||_{L^p_{t',x}}+||S(t')v(t)||_{L^p_{t',x}} \leq C'A $$
and if $u_0 \in H^\sigma$, the solutions satisfies : 

$$||u(t)||_{H^\sigma},||u_N(t)||_{H^\sigma} \leq C' ||u_0||_{H^\sigma}$$
with $u(t) = S(t)u_0 +v$ and $u_N(t) = S(t)u_0 + v_N$.

\end{theorem}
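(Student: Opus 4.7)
The strategy is a parallel Banach fixed-point argument in the Strichartz space $X^s_\tau$, applied simultaneously to \reff{limofred} and \reff{reduced}. Writing the two Duhamel operators
$$\Phi(v)(t)=-i\int_0^t S(t-t')\,H^{-1}\bigl((\Re(S(t')u_0+v(t')))^3\bigr)\,dt',$$
and the analogous $\Phi_N$ with the $S_N$ cut-offs placed as in \reff{reduced}, the solutions $v$ and $v_N$ are by definition the fixed points of $\Phi$ and $\Phi_N$ in $X^s_\tau$, so it suffices to check that both operators are strict contractions on a common ball of the right size.

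The heart of the argument is a single chain of estimates. From Proposition \reff{ineq} combined with the inequality $||f||_{Y^{1-s}_\tau}\leq C\,||f||_{L^{p'}([-\tau,\tau]\times B^3)}$ from the toolbox, one deduces
$$||\Phi(v)||_{X^s_\tau}\leq C\,||(\Re(S(t)u_0+v))^3||_{L^{p'}_{t,x}}.$$
Since $3p'\leq p$ on $]4,6[$, Hölder's inequality on the space-time box $[-\tau,\tau]\times B^3$ of measure $2\tau|B^3|$ gives $||w^3||_{L^{p'}_{t,x}}\leq C\tau^\gamma ||w||_{L^p_{t,x}}^3$, with exponent $\gamma=1/p'-3/p=1-4/p$ matching the one in the statement. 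The linear piece is controlled by the hypothesis after invoking the $2$-periodicity of $S(t)$ (the radial Dirichlet eigenvalues of $H$ being $n\pi$): $||S(t)u_0||_{L^p_{t,x}([-\tau,\tau]\times B^3)}\leq A$ as soon as $\tau\leq 1$. The nonlinear piece is handled by $||v||_{L^p_{t,x}}\leq C||v||_{X^s_\tau}$, which is precisely the first bound in the toolbox. Putting these together,
$$||\Phi(v)||_{X^s_\tau}\leq C\,\tau^\gamma\,\bigl(A+C||v||_{X^s_\tau}\bigr)^3,$$
and the same estimate holds for $\Phi_N$ uniformly in $N$, since $S_N$ is uniformly bounded on $L^p$ (hence by duality on $L^{p'}$) and commutes with $S(t)$ and $H$.

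Choosing a radius $R$ proportional to $A$ and a time $\tau$ small enough that $C\tau^\gamma(A+CR)^3\leq R$, the ball $B_R\subset X^s_\tau$ is stable under both $\Phi$ and $\Phi_N$. Contractivity on $B_R$ follows from the factorisation $a^3-b^3=(a-b)(a^2+ab+b^2)$ together with the same Hölder argument, which produces a Lipschitz constant of order $C\tau^\gamma(A+R)^2$, arranged to be $\leq 1/2$ by the choice of $\tau$. Banach's fixed-point theorem then supplies the unique $v,v_N\in B_R$ with $||v||_{X^s_\tau},||v_N||_{X^s_\tau}\leq CA$. The supplementary $L^p_{t',x}$ bound on $S(t')(u(t))$ follows from the triangle inequality, the $2$-periodicity of $S$, and Proposition \reff{combi} applied to $v(t)\in H^s$ (whose $H^s$-norm is dominated by $||v||_{X^s_\tau}$ through the embedding $X^s_\tau\hookrightarrow \mathcal C^0([-\tau,\tau],H^s)$); the $H^\sigma$ propagation uses that $S(t)$ is an isometry on $H^\sigma$ and the embedding $H^s\hookrightarrow H^\sigma$ for $\sigma<1/2\leq s$.

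The only delicate step is the precise bookkeeping of the exponent of $1+A$ in the time of existence: the naive balance $C\tau^\gamma(A+CR)^3\leq R$ with $R\sim A$ gives $\tau\sim A^{-2/\gamma}$, so matching the stated scaling $\tau=c(1+A)^{-\gamma}$ calls either for a non-naive choice of the radius $R$ or for a finer splitting of the linear and nonlinear contributions in the $L^p_{t,x}$ norm. This is the only place where a routine Strichartz-type estimate is not immediately sufficient.
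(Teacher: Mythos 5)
Your overall framework — a Banach fixed-point argument for the Duhamel operator in $X^s_\tau$, parallel for $\Phi$ and $\Phi_N$, with the $N$-uniformity coming from the $L^p$-boundedness of $S_N$ — is the right one, and it is the approach taken in the reference \cite{wavbt} to which this theorem is attributed (the paper itself gives no independent proof). However, there is a concrete error in your Strichartz step, and your honest remark at the end points at a real discrepancy whose nature you have not quite diagnosed.

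The error: you invoke ``$\|f\|_{Y^{1-s}_\tau}\leq C\|f\|_{L^{p'}}$'' as if it were in the toolbox, but the toolbox only gives $\|f\|_{Y^s_T}\leq C\|f\|_{L^{p'}}$ \emph{for the pair $(s,p)$ related by $s=\tfrac32-\tfrac4p$}. Applying it to $Y^{1-s}_T$ requires the exponent $p_2$ with $1-s=\tfrac32-\tfrac{4}{p_2}$, namely $p_2=\tfrac{2p}{p-2}$, and hence $\|f\|_{Y^{1-s}_T}\leq C\|f\|_{L^{p_2'}}$, not $L^{p'}$. Redoing the Hölder step with $p_2'$ instead of $p'$ gives $\|w^3\|_{L^{p_2'}}\leq(C\tau)^{1/p_2'-3/p}\|w\|_{L^p}^3$ with $1/p_2'-3/p=\tfrac12-\tfrac2p=\gamma/2$, i.e.\ you get $\tau^{\gamma/2}$, not $\tau^{\gamma}$. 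This is exactly the exponent $3\gamma_2=3\cdot\tfrac{p-4}{6p}=\gamma/2$ that the paper itself uses in the proof of Lemma~\ref{loccg}. Consequently the naive balance with $R\sim A$ yields $\tau\sim A^{-4/\gamma}$, not $A^{-2/\gamma}$ as you computed, and certainly not $A^{-\gamma}$.

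On your closing remark: you are right that the stated time $\tau=c(1+A)^{-\gamma}$ cannot be produced by the routine contraction estimate, but the resolution is not a cleverer choice of $R$; it is that the theorem's exponent as written is simply not what the argument yields, and what the paper actually uses downstream is consistent with $\tau\sim A^{-4/\gamma}$: Lemma~\ref{loccg} sets $\gamma_1=\max(\gamma,\tfrac{2}{3\gamma_2})=\max(\gamma,4/\gamma)=4/\gamma$ and works with $\tau_1=\min(\tau,c_1(1+D)^{-\gamma_1})$, so only the (larger) exponent $4/\gamma$ enters the global gluing and the summability of $\sum_j\tau_1(D_{i,j})$. So the ``delicate bookkeeping'' you flag is a genuine mismatch, but the fix is to carry the exponent $4/\gamma$ through (as the paper does), not to try to squeeze the Strichartz estimate to recover $A^{-\gamma}$. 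Other than this, your treatment of the contractivity, the $L^p_{t',x}$ bound via $2$-periodicity of $S(t)$ and Proposition~\ref{combi}, and the $H^\sigma$ propagation via the isometry of $S(t)$ and the embedding $X^s_\tau\hookrightarrow\mathcal C^0 H^s$ is correct in outline.
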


\paragraph{Remarks on sets' measurements} 

The local results of existence will provide local properties of uniform convergence of the sequence of flows $\psi_N$ toward the flow $\psi$ and then induce properties on the invariance of the flow that will remain local. In order to extend those next to appear local properties into a global invariance of the flow, we will have to control the quantity denoted as $A$ in the previous theorem. But this control has to satisfy certain properties, such as the set that describes the initial data that lead to a controlled solution must be of full measure.

\smallskip

To this purpose, consider the following proposition.

\begin{proposition}\label{majd} Let $\sigma < \frac{1}{2}$, let $p\in ]4,6[$, let $D\geq 0$ and consider the sets : 

$$B(D)^c = \lbrace u_0 \in H^\sigma \; |\; ||S(t)u_0||_{L^p_{t,x}} > D \; \rbrace $$
and

$$E(D)^c = \lbrace u_0 \in H^\sigma \; |\; ||u_0||_{H^\sigma }> D \rbrace \; .$$

There exists $c>0$ independant from $D$ such that : 

$$\rho(B(D)^c) , \rho_N(B(D)^c) \leq \mu(B(D)^c) \leq e^{-c D^2}$$
and

$$\rho(E(D)^c), \rho_N(E(D)^c) \leq \mu(E(D)^c) \leq e^{-cD^2} \; .$$

\end{proposition}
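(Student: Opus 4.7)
The first inequalities $\rho(B(D)^c)\leq \mu(B(D)^c)$, $\rho_N(B(D)^c)\leq \mu(B(D)^c)$ (and analogously for $E(D)^c$) are immediate: since $\Re u$ is real and the integrand $|S_N\Re u|^4$ is nonnegative, the Radon--Nikodym derivatives $f(u)=e^{-\frac{1}{4}\int_{B^3}|\Re u|^4}$ and $f_N(u)=e^{-\frac{1}{4}\int_{B^3}|S_N\Re u|^4}$ are bounded above by $1$. The whole point is thus to bound the $\mu$-measures by $e^{-cD^2}$, which amounts to Gaussian tail estimates for the two random variables $\|\varphi\|_{H^\sigma}$ and $\|S(t)\varphi\|_{L^p_{t,x}([0,2]\times B^3)}$, where $\varphi=\sum_{n\geq 1}\frac{g_n}{n\pi}e_n$.

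For $E(D)^c$, I would first verify that $\|\varphi\|_{H^\sigma}\in L^2_\omega$. Since $He_n=n\pi e_n$, one computes
$$\mathbb{E}\|\varphi\|_{H^\sigma}^2=\sum_{n\geq 1}\frac{n^{2\sigma}}{(n\pi)^2}\mathbb{E}|g_n|^2=\frac{1}{\pi^2}\sum_{n\geq 1}n^{2\sigma-2}<\infty$$
because $\sigma<\frac{1}{2}$. Viewing $\varphi$ as a Gaussian random variable with values in the separable Banach space $H^\sigma$, Fernique's theorem (or, equivalently, the Khintchine--Kahane inequality $\|\|\varphi\|_{H^\sigma}\|_{L^q_\omega}\leq C\sqrt{q}\|\|\varphi\|_{H^\sigma}\|_{L^2_\omega}$ for $q\geq 2$ followed by Markov's inequality optimized in $q$) yields a constant $c>0$ with
$$\mu(E(D)^c)=\mathbb{P}\bigl(\|\varphi\|_{H^\sigma}>D\bigr)\leq e^{-cD^2}$$
for all $D\geq 0$ (adjusting $c$ absorbs the small-$D$ regime).

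For $B(D)^c$, I would argue similarly, but now viewing $\omega\mapsto S(\cdot)\varphi(\omega,\cdot)$ as a Gaussian random variable with values in the separable Banach space $L^p([0,2]\times B^3)$. Since the eigenvalues of $H$ are $n\pi$, $S(t)\varphi=\sum_n\frac{g_n e^{in\pi t}}{n\pi}e_n$. The $\mu$-almost sure finiteness of this norm is precisely part of the framework used in \cite{wavbt} to prove Theorem~1.1, and it can be quantified through
$$\mathbb{E}\|S(t)\varphi\|_{L^p_{t,x}}^2\leq \Bigl\|\Bigl(\sum_{n\geq 1}\frac{|e_n(x)|^2}{(n\pi)^2}\Bigr)^{1/2}\Bigr\|_{L^p([0,2]\times B^3)}^2,$$
an application of Minkowski's inequality in $L^p([0,2]\times B^3)$ versus $L^2_\omega$ combined with the fact that $S(t)\varphi(t,x)$ is a centered complex Gaussian of variance $\sum_n |e_n(x)|^2/(n\pi)^2$ independent of $t$. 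This last expression is finite on $p\in]4,6[$ as a consequence of Sobolev embedding applied to $\varphi\in H^\sigma$ for suitable $\sigma$. Once the $L^2_\omega$ bound is in place, Fernique's theorem (equivalently, the Gaussian hypercontractivity estimate $\|X\|_{L^q_\omega}\leq C\sqrt{q}\|X\|_{L^2_\omega}$ valid for any norm $X$ of a Gaussian vector) upgrades this to $\mathbb{P}(\|S(t)\varphi\|_{L^p_{t,x}}>D)\leq e^{-cD^2}$.

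The main obstacle is really a conceptual one rather than a computational one: one has to identify both $\varphi$ and $S(\cdot)\varphi$ with Gaussian random variables taking values in the appropriate separable Banach spaces and check finiteness of their second moments there (the $L^p_{t,x}$ case being the less routine of the two, and relying on the randomization estimates developed in \cite{wavbt,gaussmea}). The Gaussian tail $e^{-cD^2}$ is then automatic from Fernique's theorem, with a single constant $c>0$ valid for all $D\geq 0$ after harmlessly shrinking it.
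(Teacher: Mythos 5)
Your overall strategy is the right one, and it is presumably what the cited Lemma~3.3 of \cite{lestrois} contains: the bounds $\rho,\rho_N\leq\mu$ follow trivially since $f,f_N\leq 1$, and the $\mu$-tails are Gaussian tails obtained from Fernique (or hypercontractivity plus Markov) once one checks that $\varphi\mapsto\|\varphi\|_{H^\sigma}$ and $\varphi\mapsto\|S(\cdot)\varphi\|_{L^p_{t,x}}$ are norms of a Gaussian vector with finite second moment in the relevant separable Banach space. The $H^\sigma$ computation is correct, and so is the Minkowski step reducing the $L^p_{t,x}$ case to the pointwise variance.

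The gap is in the justification of that second moment. You assert that
$\bigl\|\bigl(\sum_n|e_n(x)|^2/(n\pi)^2\bigr)^{1/2}\bigr\|_{L^p(B^3)}<\infty$ for $p\in]4,6[$ ``as a consequence of Sobolev embedding applied to $\varphi\in H^\sigma$ for suitable $\sigma$.'' This cannot be right: $\varphi$ lies almost surely in $H^\sigma$ only for $\sigma<\tfrac12$ (the second moment $\sum_n(n\pi)^{2\sigma-2}$ diverges at $\sigma=\tfrac12$), and in three dimensions Sobolev embedding from $H^\sigma$, $\sigma<\tfrac12$, only reaches $L^p$ for $p<3$, well short of $]4,6[$. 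The finiteness is precisely the \emph{probabilistic} improvement of the randomization and has to be read off the explicit eigenfunctions. Since $e_n(r)=\sin(n\pi r)/(\sqrt{\pi}\,r)$, one computes
$$\sum_{n\geq 1}\frac{|e_n(r)|^2}{(n\pi)^2}=\frac{1}{\pi r^2}\sum_{n\geq 1}\frac{\sin^2(n\pi r)}{(n\pi)^2}=\frac{1-r}{2\pi r}\,,$$
so the pointwise standard deviation behaves like $r^{-1/2}$ near $r=0$ rather than the $r^{-1}$ the crude bound $|e_n(r)|\leq(\sqrt{\pi}r)^{-1}$ would suggest, and $\int_0^1 r^{-p/2}\,r^2\,dr<\infty$ exactly for $p<6$. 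Replacing the Sobolev claim with this computation (or with the corresponding estimate from \cite{wavbt,gaussmea}, which you partly invoke) closes the argument. A lesser point: ``adjusting $c$ absorbs the small-$D$ regime'' is not literally true (for small $D>0$ the probability is near $1$, and Gaussian small-ball probabilities vanish too fast for $\mathbb{P}(\|X\|\leq D)\geq cD^2$ to hold); the honest conclusion is $\leq Ce^{-cD^2}$, or $\leq e^{-cD^2}$ for $D$ large, which is all the paper uses, though the paper's own statement shares this imprecision.
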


The proof depends on the lemma 3.3 that can be found in \cite{lestrois}.

\begin{remark}It will appear that the time $\tau_1 < \tau$ such that there is local convergence on $X^s_{\tau_1}$ depends on $D$. It will then be necessary to prove that $\tau_1$ is big enough to control $u(t)$ at some finite times $t_k$ with $k\in \varmathbb Z$ cover all times, and still have a set of initial data of full $\rho$ measure. \end{remark}

\subsection{Local uniform convergence}

We now want to prove that the flows $\psi$ and $\psi_N$ are such that $\psi(t)u_0-\psi_N(t)u_0$ converges in $X_{\tau_1}^s$ for some $\tau_1$ uniformly in $u_0$.

\begin{lemma}\label{loccg} Let $\sigma\in ]0,\frac{1}{2}[$ ,$p \in ]4,6[$ and $s$ defined as $s=\frac{3}{2}-\frac{4}{p}$. Fix $D\geq 0$ and consider $A(D)$ the set

$$A(D) = \lbrace u_0 \in H^\sigma \; | \; ||S(t)u_0||_{L^p}\leq D \mbox{ and } ||u_0||_{H^\sigma}\leq D \rbrace .$$

There exists $c_1 > 0$ and $\gamma_1 > 0$ such that by fixing $\tau_1 = \min (c_1(1+D)^{-\gamma_1},\tau)$, where $\tau$ is the time provided by the theorem \reff{locex} for all $\epsilon > 0$, there exists $N_0 \geq 0$ such that for all $u_0 \in A(D)$ and all $N \geq N_0$,

$$||\psi(t)u_0 - \psi_N(t)u_0||_{X^s_{\tau_1}}< \epsilon \; ,$$
that is to say that $\psi_N(t)u_0$ converges uniformly in $u_0 \in A(D)$ in $X^s_{\tau_1}$.
\end{lemma}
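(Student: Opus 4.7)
The plan is to estimate $w_N := v - v_N = \psi(t)u_0 - \psi_N(t)u_0$ in $X^s_{\tau_1}$ by comparing the integral formulations of \reff{limofred} and \reff{reduced}. Since $u_0 \in A(D)$ satisfies $||S(t)u_0||_{L^p_{t,x}} \leq D$, Theorem \reff{locex} applied with $A = D$ provides both $v$ and $v_N$ on $[-\tau,\tau]$ with $\tau = c(1+D)^{-\gamma}$, together with the bounds $||v||_{X^s_\tau}, ||v_N||_{X^s_\tau} \leq CD$ and $||u||_{L^p_{t,x}}, ||u_N||_{L^p_{t,x}} \leq C'D$ where $u = S(t)u_0 + v$ and $u_N = S(t)u_0 + v_N$, all uniformly in $u_0 \in A(D)$.

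Using that $S_N$ commutes with $H$ and with $S(t)$, I decompose the difference of the two nonlinearities as
$$H^{-1}[(\Re u)^3 - (\Re u_N)^3] + H^{-1}[(\Re u_N)^3 - (S_N\Re u_N)^3] + (I - S_N)H^{-1}(S_N\Re u_N)^3$$
and apply Proposition \reff{ineq}. The first two terms are handled via $||\cdot||_{Y^{1-s}_{\tau_1}} \leq C||\cdot||_{L^{p'}_{t,x}}$ after factorizing $(\Re u)^3 - (\Re u_N)^3 = \Re w_N \cdot Q_1(u,u_N)$ and $(\Re u_N)^3 - (S_N\Re u_N)^3 = (I-S_N)\Re u_N \cdot Q_2(u_N, S_N u_N)$ with quadratic $Q_1,Q_2$, making use of the extra H\"older room $\gamma = 1 - 4/p > 0$ on $[-\tau_1,\tau_1]\times B^3$; the third enjoys the $N^{s-s_1}$-gain provided by the second half of Proposition \reff{ineq}. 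Putting this together,
$$||w_N||_{X^s_{\tau_1}} \leq C\tau_1^\gamma D^2 ||w_N||_{X^s_{\tau_1}} + C\tau_1^\gamma D^2 ||(I-S_N)\Re u_N||_{L^p_{t,x}} + CN^{s-s_1}M(D),$$
with $M(D)$ independent of $N$ and of $u_0 \in A(D)$. Choosing $\tau_1 = \min(c_1(1+D)^{-\gamma_1},\tau)$ with $\gamma_1 = 2/\gamma$ and $c_1$ small enough so that $C\tau_1^\gamma D^2 \leq 1/2$, the first term on the right absorbs into the left-hand side.

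It then suffices to show that $||(I-S_N)\Re u_N||_{L^p_{t,x}} \to 0$ as $N \to \infty$, uniformly in $u_0 \in A(D)$. Splitting $u_N = S(t)u_0 + v_N$, the contribution of $v_N$ is controlled by a second application of Proposition \reff{ineq} to the integral formulation of $v_N$ itself combined with Proposition \reff{combi}, producing an $N^{s-s_1}$-gain uniform in $u_0 \in A(D)$. I expect the main obstacle to be the linear piece $||(I-S_N)S(t)u_0||_{L^p_{t,x}}$: for each individual $u_0$ it tends to zero by the pointwise $L^p$-convergence $S_N f \to f$ combined with dominated convergence in time, but uniformity in $u_0 \in A(D)$ is delicate since $A(D)$ is not compact in $H^\sigma$. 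The strategy is to exploit the defining bounds of $A(D)$ jointly: the $H^\sigma$ bound makes $A(D)$ precompact in $H^{\sigma'}$ for any $\sigma' < \sigma$ by Rellich--Kondrachov, so $A(D)$ is covered by finitely many small $H^{\sigma'}$-balls; on a low-frequency cut-off $\Pi_M u_0$ with $M$ chosen independently of $N$, one has $(I-S_N)S(t)\Pi_M u_0 = 0$ for $N \geq 2M$, while the high-frequency remainder is controlled via the $L^p$-boundedness of $S_N$ together with the Strichartz constraint $||S(t)u_0||_{L^p_{t,x}} \leq D$ and a continuity argument against the finite net of representatives.
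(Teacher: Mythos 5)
Your overall scaffolding matches the paper's: you compare the Duhamel formulas, absorb the ``diagonal'' piece via a quadratic estimate and a small $\tau_1$, extract the $N^{s-s_1}$ gain from the second half of Proposition~\reff{ineq}, and are left with one ``hard'' term involving $(I-S_N)$ applied to the flow. The gap is in how you handle that hard term. You propose to show $\|(I-S_N)S(t)u_0\|_{L^p_{t,x}}\to 0$ \emph{uniformly} in $u_0\in A(D)$ by Rellich--Kondrachov: cover $A(D)$ by finitely many $H^{\sigma'}$-balls with $\sigma'<\sigma$, treat the centers individually, and pass to the whole set by ``a continuity argument against the finite net.'' That last step requires an estimate of the form $\|S(t)(u_0-u^{(k)})\|_{L^p_{t,x}}\lesssim \|u_0-u^{(k)}\|_{H^{\sigma'}}$, but the Strichartz embedding $S(t):H^s\to L^p_{t,x}$ holds only for $s=\frac32-\frac4p>\frac12>\sigma>\sigma'$; there is no bound of the $L^p_{t,x}$ Strichartz norm by an $H^{\sigma'}$ norm with $\sigma'<\frac12$. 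This is precisely the obstruction the introduction of the paper flags (``the convergence of $\Pi_N u_0$ in $H^\sigma$ does not ensure the convergence of this norm applied to $\Pi_N u_0$''), so compactness of $A(D)$ in any $H^{\sigma'}$, $\sigma'<\sigma$, is genuinely useless here. Covering $A(D)$ by $H^s$-balls is impossible since $A(D)$ is unbounded in $H^s$.

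What makes the paper's argument go through is that, after H\"older on the factorized cube difference, the problematic term appears at the \emph{strictly lower} exponent $3p_2'=\frac{6p}{p+2}<p$, not at $L^p$. The paper interpolates $\|(1-S_N)\psi(s)u_0\|_{L^{3p_2'}}\le \|(1-S_N)\psi(s)u_0\|_{L^{p_3}}^\theta\|(1-S_N)\psi(s)u_0\|_{L^p}^{1-\theta}$ with $p_3<3<3p_2'<p$: the $L^p$ factor is only \emph{bounded} by $CD$ (via the $X^s_\tau$ control of $v$ and the $L^p$-boundedness of $S_N$), while the $L^{p_3}$ factor is \emph{small}, because $p_3$ is chosen so that the Sobolev exponent $\sigma_3$ of $L^{p_3}$ satisfies $\sigma_3<\sigma$, whence $\|(1-S_N)\psi(s)u_0\|_{L^{p_3}}\lesssim\|(1-S_N)\psi(s)u_0\|_{H^{\sigma_3}}\lesssim N^{\sigma_3-\sigma}\|\psi(s)u_0\|_{H^\sigma}\lesssim D\,N^{\sigma_3-\sigma}$, uniformly in $u_0\in A(D)$, using the $H^\sigma$ a priori bound from Theorem~\reff{locex}. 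No compactness is needed. To repair your proof, drop the Rellich--Kondrachov step, keep track that the hard term lives in $L^{3p_2'}$ (not $L^p$), and introduce this Sobolev/interpolation gain against the uniform $H^\sigma$ bound on the flow.
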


\begin{proof} Let $u_0 \in A(D)$ and $v$ and $v_N$ be such that 

$$\psi(t)u_0 = S(t) u_0 + v(t)  \mbox{ and } \psi_N(t)u_0 = S(t)u_0 + v_N(t)\; .$$

The functions $v$ and $v_N$ are both in $X^s_{\tau}$ so the norm

$$||\psi(t)u_0 -\psi_N(t)u_0||_{X^s_{\tau_1}} = ||v-v_N||_{X^s_{\tau_1}}$$
is finite for all $\tau_1 \leq \tau $.

\smallskip

For all $t\leq \tau$ : 

$$v(t)-v_N(t) = \int_{0}^t S(t-s) H^{-1}\left( (\Re \psi(s)u_0)^3-S_N ((S_N \Re \psi_N(s)u_0 )^3) \right) ds $$
that is to say $v-v_N = I_N + II_N$ with

$$I_N = (1-S_N)\int_{0}^t S(t-s) H^{-1}\left( (\Re \psi(s)u_0)^3 \right)ds $$
and

$$II_N = \int_{0}^t S(t-s) S_N H^{-1}\left( (\Re \psi(s)u_0)^3- (\Re S_N \psi_N(s)u_0)^3 \right) ds \; .$$

Thanks to proposition \reff{ineq}, for $T \leq \tau$ and $s_1 > s$

$$||I_N||_{X^s_{T}} \leq C N^{s-s_1} ||(\Re \psi u_0)^3||_{Y^{1-s_1}_T}$$
with $C$ independant from $D$, $T$, and $N$. Let $p_1$ be such that $1-s_1 = \frac{3}{2}-\frac{4}{p_1}$. The condition $s_1>s$ is equivalent to $p_1<\frac{2p}{p-2}$. Hence :

$$||I_N||_{X^s_T} \leq C N^{s-s_1} ||(\Re \psi u_0 )^3||_{L^{p_1'}_{t,x}} $$

$$||I_N||_{X^s_T} \leq C N^{s-s_1} ||(\Re \psi u_0 )||_{L^{3p_1'}}^3 \; .$$

To majore this norm, the condition $3p_1' \leq p $ is wanted. This condition is equivalent to $p_1 \geq \frac{p}{p-3}$, which means  $p_1$ has to be chosen in the interval $[\frac{p}{p-3}, \frac{2p}{p-2}[$. But since $p> 4$, $\frac{p}{p-3}<\frac{2p}{p-2}$, and so, such a choice is possible, and in particular, by choosing $p_1 = \frac{p}{p-3}$, or $3p'_1 = p$, it comes : 

$$||I_N||_{X^s_T} \leq C N^{s-s_1} \left( ||S(t)u_0 ||_{L^p} + ||v||_{L^p}\right)^3$$

$$||I_N||_{X^s_T} \leq C N^{s-s_1} \left( ||S(t)u_0 ||_{L^p} + ||v||_{X^s_T} \right)^3$$
and thanks to theorem \reff{locex}, $||v||_{X^s_T}\leq CD$ with $C$ independant from $D$ and $T$ as long as $T\leq \tau $ so

$$||I_N||_{X^s_T} \leq C N^{s-s_1} D^3 \; .$$

Therefore, for all $\epsilon > 0$, there exists $N_0$ such that for all $u_0 \in A(D)$, all $T\leq \tau$ and all $N\geq N_0$,

$$||I_N||_{X^s_T} \leq \epsilon \; .$$

Once more, thanks to \reff{ineq}, 

$$||II_N||_{X^s_T} \leq ||\left( (\Re \psi(s)u_0)^3- (\Re S_N \psi_N(s)u_0)^3 \right)||_{Y^{1-s}_T}$$
and with $p_2$ such that $1-s = \frac{3}{2}-\frac{4}{p_2}$ that is $p_2 = \frac{2p}{p-2}$, 

$$||II_N||_{X^s_T} \leq ||\left( (\Re \psi(s)u_0)^3- (\Re S_N \psi_N(s)u_0)^3 \right)||_{L^{p'_2}} \; .$$

Since 

$$|(\Re \psi(s)u_0)^3- (\Re S_N \psi_N(s)u_0)^3| \leq \frac{3}{2}|\Re \psi(s)u_0-\Re S_N\psi_N(s)u_0|\left( (\Re \psi(s)u_0)^2+(\Re S_N\psi_N(s)u_0)^2 \right) \; ,$$

by Hölder inequality with $\frac{1}{p_2'} = \frac{1}{3p'_2}+\frac{2}{3p'_2}$,

$$||II_N||_{X^s_T} \leq ||\Re \psi(s)u_0- \Re S_N \psi_N(s)u_0||_{L^{3p'_2}}||(\Re \psi(s)u_0)^2+(\Re S_N\psi_N(s)u_0)^2 ||_{L^{3p'_2/2}}$$

$$\leq ||\Re \psi(s)u_0- \Re S_N \psi_N(s)u_0 ||_{L^{3p'_2}}\left( ||(\Re \psi(s)u_0)^2||_{L^{3p'_2/2}}+||(\Re S_N \psi_N(s)u_0)^2||_{L^{3p'_2/2}}\right) $$

As $3p'_2 = \frac{6p}{p+2} < p$ and 

$$||(\Re \psi(s)u_0)^2||_{L^{3p'_2/2}} = ||\Re \psi(s)u_0||^2_{L^{3p'2}} \leq T^{2\gamma_2} ||\psi(s)u_0||_{L^p}^2$$
with $\gamma_2 = \frac{p-4}{6p}$, it comes that : 

$$||II_N||_{X^s_T}\leq C D^2 T^{2\gamma_2}||\Re \psi(s)u_0- \Re S_N \psi_N(s)u_0||_{L^{3p'_2}}$$
with $C$ independent from $D$ and $T$ as long as $T\leq \tau$.

\smallskip 

Now, the quantity $||\Re \psi(s)u_0- \Re S_N \psi_N(s)u_0||_{L^{3p'_2}}\leq \alpha_N + \beta_N$ remains to be considered, with $\alpha_N = ||(1-S_N) \psi(s) u_0||$ and $\beta_N = ||S_N(\psi(s)u_0 -\psi_N(s)u_0)||$.

By the same convex inequalities as precedently,  

$$\beta_N \leq C T^{\gamma_2} ||\psi(s)u_0-\psi_N(s)u_0||_{L^p}\leq CT^{\gamma_2} ||\psi(s)u_0-\psi_N(s)u_0||_{X^s_T}\; .$$

Choose $2<p_3 < \frac {3}{3/2-\sigma} \in ]2,3[$  and call $\sigma_3 = 3\left( \frac{1}{2}-\frac{1}{p_3} \right)< \sigma $. As $p_3< 3 < 3p'_2 < p$ so, there exists $\theta \in ]0,1[$ such that $\frac{1}{3p'_2} = \frac{\theta}{p_3}+\frac{1-\theta}{p}$, thus

$$\alpha_N \leq ||(1-S_N)\psi(s)u_0 ||_{L^{p_3}}^\theta ||(1-S_N) \psi(s)u_0||_{L^p}^{1-\theta}$$

$$||(1-S_N) \psi(s)u_0||_{L^p}\leq C(||S(t)u_0||_{L^p}+||v||_{X^s_T})\leq CD$$
and by Sobolev embedding theorem : 

$$||(1-S_N)\psi(s)u_0||_{L^{p_3}}\leq ||(1-S_N)\psi(s)u_0||_{L^{p_3}_t,H^{\sigma_3}_x}$$

Now, for all $s$,

$$||(1-S_N)\psi(s) u_0||_{H^{\sigma_3}} \leq C N^{\sigma_3-\sigma }||\psi(s)u_0||_{H^\sigma} \leq C D N^{\sigma_3-\sigma}$$
and so

$$\alpha_N \leq C N^{\theta(\sigma_3-\sigma)}D^\theta D^{1-\theta} = CN^{\theta (\sigma_3-\sigma)}D \; .$$

Therefore, for all $\epsilon > 0$, there exists $N_0$ such that for all $u_0\in A(D)$, all $T\leq \tau$, and all $N\geq N_0$, 

$$\alpha_N \leq \epsilon \; .$$

Now, let us sum up these inequalities.

$$||\psi(t)u_0-\psi_N(t) u_0||_{X^s_T}\leq I_N + C D^2 T^{2\gamma_2}(\alpha_N + \beta_N) \leq I_N + C\alpha_N + CD^2 T^{3\gamma_2} ||\psi(t)u_0-\psi_N(t) u_0||_{X^s_T}\; .$$

Set $ \gamma_1 = \max (\gamma, \frac{2}{3\gamma_2})$ and $\tau_1 = \min(\tau, C^{-1/(3\gamma_2)}(1+D)^{-\gamma_1})$, such that $CD^2 \tau_1^{3\gamma_2} < CD^2 \tau_1^{2/\gamma_1}<1$, hence

$$||\psi(t)u_0-\psi_N(t) u_0||_{X^s_{\tau_1}}\leq C (I_N + \alpha_N)$$
so for all $\epsilon > 0$ there exists $N_0$ such that for all $u_0 \in A(D)$ and all $N\geq N_0$

$$||\psi(t)u_0-\psi_N(t) u_0||_{X^s_{\tau_1}}\leq \epsilon \; .$$
\end{proof}

\begin{remark}Note that the construction of $\gamma_1 $ ensures that $\gamma_1 \geq \gamma$ but that $ \tau_1$ is still a power of $D$. \end{remark}

\subsection{Local invariance}

Let us show that the measure is invariant under the flow locally in time. That is, as long as the sequence $\psi_N(t)u_0$ converges uniformly on some sets, it will appear that $\rho(\psi(t)A) \geq \rho(A)$. This is the first step in order to reach a global invariance result for the measure. 

\begin{lemma}\label{locgrowth} Let $\sigma \in ]0,\frac{1}{2}[$, $p\in ]4,6[$, $s = \frac{3}{2}- \frac{4}{p}$, and $D> 0$. Set $A(D)$ the set described in \reff{loccg}, $\tau = c(1+D)^{-\gamma}$ the local exitence time coming from theorem \reff{locex} and $\tau_1 = \min(\tau, c_1(1+D)^{-\gamma_1})$ the local time of uniform convergence, all three depending only on $D$ and $p$. Then, for all $A\subseteq A(D)$ measurable, and all $t\in [-\tau_1,\tau_1]$, the set $\psi(t) A$ is measurable and : 

$$\rho(\psi(t)A) = \rho(A) \; .$$
\end{lemma}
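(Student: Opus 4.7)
The plan is to leverage three facts already in hand: the exact invariance $\rho_N(\psi_N(t)A)=\rho_N(A)$ from Proposition~\reff{fininv}, the setwise convergence $\rho_N\to\rho$ coming from $\|f_N-f\|_{L^1_\mu}\to 0$ (Proposition~\reff{lunconv}), and the uniform convergence $\psi_N(t)u_0\to\psi(t)u_0$ on $A(D)$ furnished by Lemma~\reff{loccg}. Since $s=\frac{3}{2}-\frac{4}{p}>\frac{1}{2}>\sigma$, the $X^s_{\tau_1}$-bound in Lemma~\reff{loccg} upgrades to uniform convergence in $\mathcal{C}^0([-\tau_1,\tau_1],H^\sigma)$. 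As each $\psi_N(t)$ is continuous on $H^\sigma$, the uniform limit $\psi(t)\colon A(D)\to H^\sigma$ is continuous, and Theorem~\reff{locex} gives $\psi(t)A(D)\subseteq A(D')$ with $D'=C'D$. Reversibility of \reff{hamilb} provides the two-sided inverse $\psi(-t)$, so $\psi(t)\colon A(D)\to\psi(t)A(D)$ is a homeomorphism; in particular Borel subsets of $A(D)$ are mapped to Borel subsets of $H^\sigma$, which takes care of the measurability of $\psi(t)A$.

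\smallskip

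\textbf{Invariance on compact sets.} Let $K\subseteq A(D)$ be compact; then $\psi(t)K$ is compact. Given any open $U\supseteq\psi(t)K$, compactness furnishes $\delta>0$ such that the $\delta$-neighborhood $V_\delta(\psi(t)K)=\{u\in H^\sigma : d(u,\psi(t)K)<\delta\}$ satisfies $V_\delta(\psi(t)K)\subseteq U$. By Lemma~\reff{loccg}, for $N$ large enough, $\psi_N(t)K\subseteq V_\delta(\psi(t)K)\subseteq U$, and Proposition~\reff{fininv} gives $\rho_N(U)\geq\rho_N(\psi_N(t)K)=\rho_N(K)$. Since $|\rho_N(B)-\rho(B)|\leq\|f_N-f\|_{L^1_\mu}\to 0$ uniformly in $B$, letting $N\to\infty$ yields $\rho(U)\geq\rho(K)$, and outer regularity of the finite Borel measure $\rho$ on the Polish space $H^\sigma$ then gives $\rho(\psi(t)K)\geq\rho(K)$. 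Since $\psi(t)K\subseteq A(D')$ is itself compact, and since the constants in Lemma~\reff{loccg} can be chosen (by shrinking $c_1$) so that $\tau_1\leq c_1(1+D')^{-\gamma_1}$, the same argument applied to $\psi(t)K$ with time $-t$ furnishes $\rho(K)=\rho(\psi(-t)\psi(t)K)\geq\rho(\psi(t)K)$, and so $\rho(\psi(t)K)=\rho(K)$.

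\smallskip

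\textbf{Extension to measurable sets.} The measure $\rho$ is tight on the Polish space $H^\sigma$. For a measurable $A\subseteq A(D)$, inner regularity gives
\[ \rho(A)=\sup_{K\subseteq A\text{ compact}}\rho(K)=\sup_K\rho(\psi(t)K)\leq\rho(\psi(t)A), \]
while inner regularity applied to $\psi(t)A$, together with $\psi(-t)K'\subseteq A$ being compact for every compact $K'\subseteq\psi(t)A$, yields
\[ \rho(\psi(t)A)=\sup_{K'\subseteq\psi(t)A\text{ compact}}\rho(K')=\sup_{K'}\rho(\psi(-t)K')\leq\rho(A). \]

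\smallskip

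The main obstacle is not conceptual but one of bookkeeping of constants: the single time $\tau_1$ must serve simultaneously the forward evolution on $A(D)$ and the backward evolution on $A(D')=A(C'D)$, which is arranged by absorbing the factor $C'$ into the constant $c_1$ of Lemma~\reff{loccg}. Beyond that, the argument is a clean interplay of finite-dimensional invariance, $L^1$-density convergence, uniform convergence of the flows, and Borel regularity of $\rho$.
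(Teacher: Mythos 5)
Your argument is correct, but it takes a genuinely different route than the paper's. The paper works on \emph{closed} subsets $A\subseteq A(D)$: it inserts the inclusion $\psi_N(t)A\subseteq\psi(t)A+B_\epsilon$ to get $\rho(A)\leq\rho(\psi(t)A)$, and for the reverse inequality it uses the reversibility $\psi_N(-t)=\psi_N(t)^{-1}$ together with a uniform-in-$N$ Lipschitz continuity bound $\psi_N(-t)(\psi_N(t)A+\widetilde B_\epsilon)\subseteq A+\widetilde B_{C\epsilon}$, to conclude $\psi(t)A\subseteq\psi_N(t)(A+\widetilde B_{C\epsilon})$ and hence $\rho(\psi(t)A)\leq\rho(A)$; the extension to measurable $A$ is by stability under complementation and countable unions, using that $A(D)$ itself is closed. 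You work on \emph{compact} sets and extract from compactness the fact that any open $U\supseteq\psi(t)K$ contains a uniform $\delta$-neighborhood; then outer regularity gives the forward inequality, and the reverse inequality is obtained by rerunning the forward argument on $\psi(t)K\subseteq A(D')$ with time $-t$, rather than by appealing to the uniform-in-$N$ continuity of $\psi_N(-t)$. The extension to measurable $A$ is then by inner regularity (tightness) of $\rho$, rather than by $\sigma$-algebra arguments.

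Two things to be careful about in your version. First, the constant bookkeeping you flag at the end is not quite as simple as ``shrink $c_1$'': since $\tau_1$ depends on $c_1$, the condition $\tau_1\leq c_1(1+D')^{-\gamma_1}$ has $\tau_1$ on both sides; the clean way to say it is that one redefines $\tau_1(D):=\tau_1^{\mathrm{old}}(C'D)$, which is still of the form $\min(c'(1+D)^{-\gamma},c_1'(1+D)^{-\gamma_1})$ because $1+C'D\leq C'(1+D)$, and is automatically no larger than either $\tau_1^{\mathrm{old}}(D)$ or $\tau_1^{\mathrm{old}}(D')$; this is legitimate because the lemma only requires some $\tau_1$ of this form. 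Second, your measurability claim ``$\psi(t)$ maps Borel subsets of $A(D)$ to Borel subsets of $H^\sigma$'' is true but deserves a word: it is the Lusin--Souslin theorem (a continuous injection on a Borel subset of a Polish space has Borel image), or alternatively the observation that $\psi(t)A=(\psi(-t))^{-1}(A)\cap\psi(t)A(D)$ and $\psi(-t)$ is continuous on the Borel set $\psi(t)A(D)$. The paper's approach sidesteps this by extending invariance from closed sets by complementation inside $A(D)$. What your approach buys is that it avoids the need to establish uniform-in-$N$ Lipschitz continuity of $\psi_N(-t)$ on $H^\sigma$ (the constant $C$ in the paper's $\widetilde B_{C\epsilon}$), which is a nontrivial Gronwall-type fact the paper invokes without proof; what it costs is the extra measure-theoretic regularity machinery and the slightly delicate constant absorption.
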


\begin{proof}First, for all $A$ measurable, $\psi(t) A$ is also measurable thanks to the local continuity of the flow. Assume now that $A$ is a closed set of $H^\sigma$ included in $A(D)$ and set $\epsilon > 0$. By lemma \reff{loccg}, there exists $N_0$ such that for all $u_0 \in A(D)$ and all $N\geq N_0$

$$||\psi(t)u_0 - \psi_N(t)u_0||_{X^s_{\tau_1}}\leq \epsilon \; .$$

But by definition, $||\; . \; ||_{X^s_{\tau_1}}\geq ||\; .\; ||_{\mathcal C^0([-\tau_1,\tau_1],H^s(B^3))}$. So for all $t\in [-\tau_1,\tau_1]$, 

$$||\psi(t)u_0-\psi_N(t)u_0||_{H^s} \leq \epsilon \; .$$

Let $B_{\epsilon}$ be the ball in $H^s$ of center $0$ and radius $\epsilon $, as $A\subseteq A(D)$, for all $N\geq N_0$, and all $t\in [-\tau_1,\tau_1]$,

$$\psi_N(t)(A) \subseteq \psi(t)(A) + B_{\epsilon}$$
therefore

$$\rho_N(\psi_N(t)(A)) \leq \rho_N(\psi(t)(A)+B_\epsilon) \; .$$

Then, since the measure $\rho_N$ is invariant under the flow $\psi_N$, as it is stated in proposition \reff{fininv},

$$\rho_N(\psi_N(t)A) = \rho_N(A)$$
and thus

$$\rho_N(A) \leq \rho_N(\psi(t)A + B_\epsilon) \; .$$

Then, using the fact that (proposition \reff{lunconv}), for all $\mu$ measurable set $B$, 

$$\rho(B) = \lim_{N\rightarrow \infty} \rho_N(B)\; ,$$
and as the property $\rho_N(A) \leq \rho_N(\psi(t)A + B_\epsilon)$ is true for all $N\geq N_0$, by taking the limit : 

$$\rho(A) \leq \rho(\psi(t) A + B_{\epsilon})$$
and then by making $\epsilon$ tend toward $0$, thanks to the dominated convergence theorem,

$$\rho(A) \leq \rho(\psi(t) A)$$
and that for all $t\in [-\tau_1,\tau_1]$. Indeed, $\psi(-t) = \psi(t)^{-1}$ is continuous in $H^\sigma$, so $\psi(t)A$ is closed in $H^\sigma$.

\smallskip

For the reverse inequality, use the fact that indeed, $\psi(t)A \subseteq \psi_N(t) A + B_\epsilon$ for all $u_0 \in A(D)$ and $ n\geq N_0$. It is also true that the ball $\widetilde B_\epsilon$ of radius $\epsilon$ in $H^\sigma$ contains $B_\epsilon$ as $\sigma<\frac{1}{2}< s$, so :

$$\psi(t) A \subseteq \psi_N(t)A + \widetilde B_\epsilon \; .$$

Then, the fact that the equation is reversible, and so $\psi_N(t)^{-1} = \psi_N(-t)$ is used. Also, thanks to the continuity of the local flow on $H^\sigma$, there exists a constant $C$ depending on the time $t$ but not on $\epsilon$ or $N$ such that : 

$$\psi_N(-t)(\psi_N(t) A + \widetilde B_\epsilon) \subseteq A + \widetilde B_{C\epsilon}$$
so

$$\psi(t) A \subseteq \psi_N(t)A+\widetilde B_\epsilon \subseteq \psi_N(t)(A + \widetilde B_{C\epsilon})$$
and

$$\rho_N(\psi(t)A) \leq \rho_N \left( \psi_N(t)(A + \widetilde B_{C\epsilon})\right) = \rho_N(A+\widetilde B_{C\epsilon})$$
thanks to the invariance of $\rho_N$ under $\psi_N$.

\smallskip

By passing to le limit $N\geq N_0 \rightarrow \infty$,

$$\rho(\psi(t) A) \leq \rho(A+\widetilde B_{C\epsilon})$$
and then $\epsilon \rightarrow 0$,

$$\rho(\psi(t)A) \leq \rho(A) \; $$
so for all closed set $A$ of $H^\sigma$ included in $A(D)$, 

$$\rho(\psi(t)A) = \rho(A)\; .$$

Then, remark that $A(D)$ is a closed set of $H^\sigma$, so

$$\rho(\psi(t)A(D)) = \rho(A(D)) $$
and thus the property of invariance under the flow passes to the complementary and the denombrable unions. It holds on every measurable set.
\end{proof}

\section{Measure invariance}

\subsection{Building sets of full measure with global existence}

We now need to build a set of full $\rho$ measure such that in this set not only the local invariance result holds but also can be extended to a global one, that is to say that in this set the flow must be globally well defined.

\begin{definition}Let 

$$D_{i,j} = ( i + j^{1/\gamma_1})^{1/2}$$ 
with $i,j \in \N$ and set $T_{i,j} = \sum_{l=1}^j \tau_1(D_{i,l})$. Let

$$\Pi_{N,i} = \lbrace u_0 \; |\; \forall j\in \N \; , \; \psi_N(\pm T_{i,j} ) \in A(D_{i,j+1})\rbrace $$
and

$$\Pi_i = \limsup_{N\rightarrow \infty} \Pi_{N,i}$$
and finally 

$$\Pi = \bigcup_{i\in \N} \Pi_i \; .$$
\end{definition}

\begin{proposition} The set $\Pi$ is of full $\rho$ measure. \end{proposition}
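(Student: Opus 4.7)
The plan is to show $\rho(\Pi^c)=0$. Since $\Pi^c=\bigcap_{i\in \N}\Pi_i^c$, it suffices to prove a bound of the form $\rho(\Pi_i^c)\leq K e^{-ci}$ with constants $c,K>0$ independent of $i$, and then let $i\to\infty$. I will do this by first controlling $\rho_N(\Pi_{N,i}^c)$ uniformly in $N$, then transferring to $\rho$ via the $L^1_\mu$-convergence $f_N\to f$, and finally passing to the $\liminf$ in $N$ with Fatou's lemma.

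The core uniform estimate comes from unwinding the definition:
\begin{equation*}
\Pi_{N,i}^c=\bigcup_{j\in \N}\bigl(\psi_N(T_{i,j})^{-1}(A(D_{i,j+1})^c)\cup \psi_N(-T_{i,j})^{-1}(A(D_{i,j+1})^c)\bigr).
\end{equation*}
Proposition \ref{fininv} says that $\rho_N$ is invariant under $\psi_N$, so each set appearing in this union has $\rho_N$-measure equal to $\rho_N(A(D_{i,j+1})^c)$. Using $A(D)^c\subseteq B(D)^c\cup E(D)^c$ together with Proposition \ref{majd} one gets $\rho_N(A(D)^c)\leq 2e^{-cD^2}$, and hence
\begin{equation*}
\rho_N(\Pi_{N,i}^c)\leq 4\sum_{j\in \N}e^{-cD_{i,j+1}^2}=4 e^{-ci}\sum_{j\in \N}e^{-c(j+1)^{1/\gamma_1}}\leq K e^{-ci},
\end{equation*}
with $K<\infty$ because $1/\gamma_1>0$; crucially both $c$ and $K$ are independent of $N$ and $i$.

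To transfer the bound to $\rho$, I would use that $d\rho=f\,d\mu$ and $d\rho_N=f_N\,d\mu$ give $|\rho(E)-\rho_N(E)|\leq \|f-f_N\|_{L^1_\mu}$ uniformly in measurable $E$, while Proposition \ref{lunconv} makes this error some $\eta_N\to 0$. Thus $\rho(\Pi_{N,i}^c)\leq K e^{-ci}+\eta_N$. Since $\Pi_i=\limsup_N\Pi_{N,i}$, we have $\Pi_i^c=\liminf_N\Pi_{N,i}^c$, and Fatou's lemma applied to indicator functions yields $\rho(\Pi_i^c)\leq \liminf_N \rho(\Pi_{N,i}^c)\leq K e^{-ci}$. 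As this is valid for every $i$, $\rho(\Pi^c)\leq \inf_i\rho(\Pi_i^c)=0$.

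The main conceptual obstacle is the required uniformity in $N$ of the estimate on $\Pi_{N,i}^c$: this is precisely what forces us to invoke invariance of the finite-dimensional measure $\rho_N$ under the approximating flow $\psi_N$, rather than any invariance of $\rho$ (which has not yet been proved, and is in fact the very conclusion being built up to). The uniform-in-sets convergence $\rho_N\to \rho$ supplied by $\|f-f_N\|_{L^1_\mu}\to 0$ is what allows the bound to pass to the limit. A subsidiary consistency check, built into the exponents $1/\gamma_1$ appearing in $D_{i,j}$, is that $\tau_1(D_{i,l})$ decays only like $l^{-1/2}$ so that $T_{i,j}\to\infty$; this plays no role in the full-measure statement itself but is what will allow $\Pi$ in the next subsection to support a globally-in-time well-defined flow.
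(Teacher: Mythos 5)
Your proof is correct and follows essentially the same route as the paper's: unwind $\Pi_{N,i}^c$ as a countable union, use the $\psi_N$-invariance of $\rho_N$ from Proposition \ref{fininv} together with the tail bound of Proposition \ref{majd} to get a uniform-in-$N$ estimate $\rho_N(\Pi_{N,i}^c)\leq K e^{-ci}$, transfer to $\rho$ via $\|f-f_N\|_{L^1_\mu}\to 0$, and finish with Fatou on $\Pi_i^c=\liminf_N\Pi_{N,i}^c$. Your write-up is in fact a bit more careful than the original about the $j\mapsto j+1$ index shift and about making the uniform-in-$N$ bound explicit before passing to the limit.
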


\begin{proof} Let us compute the measure of the complementary set of $\Pi$. 

\smallskip

First, as 

$$\Pi_{N,i} = \bigcap_{j\in \N} \psi_{N}(\pm T_{i,j})^{-1} (A(D_{i,j+1}))\; ,$$

$$\rho(\Pi_{N,i}^c) \leq ||f-f_N||_{L^1_\mu} + \rho_N(\Pi_{N,i}^c) $$
and

$$\rho_N(\Pi_{N,i}^c)\leq \sum_{j=0}^\infty \rho_N(\left( \psi_N(\pm T_{i,j})^{-1}(A(D_{i,j+1})\right)^c) \; .$$

Then, using that $\left( \psi_N(\pm T_{i,j})^{-1}(A(D_{i,j+1})\right)^c = \psi_N(\pm T_{i,j})^{-1}(A(D_{i,j})^c)$, it becomes clear that the $\rho_N$ measures of these sets are equal and as $\rho_N$ is invariant under the flow $\psi_N$ : 

$$\rho_N(\Pi_{N,i}^c) \leq 2 \sum_{j} \rho_N(A(D_{i,j})^c) \leq 2 \sum_{j} \mu(A(D_{i,j})^c) $$

But, $A(D)^c = B(D)^c \cup E(D)^c$ with $B$ and $E$ the sets defined in \reff{majd} so

$$\mu(A(D)^c) \leq \mu(B(D)^c) + \mu(E(D)^c) \leq 2 e^{-cD^2}$$
so

$$ \rho_N(\Pi_{N,i}) \leq C \sum_j e^{-cD_{i,j}^2}$$
and so

$$\rho(\Pi_{N,i}^c) \leq Ce^{-c i}\sum_{j} e^{-c j^{1/\gamma_1}}\leq C e^{-c i}$$
as $e^{-cj^{1/\gamma_1}}$ is the general term of a convergent series.

\smallskip

Therefore : 

$$\rho(\Pi_i^c) = \rho(\liminf \Pi_{N,i}^c) \leq \liminf \rho(\Pi_{N,i}^c) =  e^{-cD^2i} + \liminf ||f-f_N||_{L^1_\mu} = C e^{-ci}$$
and then

$$\rho(\Pi^c) = \rho(\bigcap_{i} \Pi_i^c) \leq \lim \rho(\Pi_i^c) = 0$$
that is to say that $\Pi$ is of full measure.
\end{proof}

\begin{lemma}\label{discon}Let $u_0 \in \Pi_i$, the flow $\psi(t)u_0$ is strongly globally defined and for all $j \in \N$ and $\psi(\pm T_{i,j}) u_0$ belongs to $A(D_{i,j+1})$. \end{lemma}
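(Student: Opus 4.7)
The plan is to run an induction on $j \in \N$ that simultaneously proves three things: that $\psi(t)u_0$ extends to $[-T_{i,j+1}, T_{i,j+1}]$, that $\psi(\pm T_{i,j})u_0 \in A(D_{i,j+1})$, and that along the infinite set $\mathcal{N}\subseteq\N$ supplied by the hypothesis $u_0 \in \Pi_i = \limsup_N \Pi_{N,i}$, one has $\psi_N(\pm T_{i,j})u_0 \to \psi(\pm T_{i,j})u_0$ in $H^s$. Two preliminary observations will be used throughout. First, each set $A(D)$ is closed in $H^\sigma$: the $H^\sigma$-bound is clearly closed, and the condition $\|S(t)u_0\|_{L^p_{t,x}}\leq D$ is lower semicontinuous under $H^\sigma$-convergence (a bounded sequence in $L^p$ which converges in $\mathcal{D}'$ converges weakly to the same limit, and the $L^p$-norm is weakly lower semicontinuous). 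Second, the local flow $\psi(\tau)$ is continuous in the $H^s$ topology on $A(D)$, either by the continuity of the fixed-point construction of theorem \ref{locex} (with proposition \ref{combi} converting $H^s$-convergence into the Strichartz convergence of $S(t)$ on the initial data), or by sandwiching $\psi$ with the finite-dimensional flows $\psi_M$ for large fixed $M$ using two applications of lemma \ref{loccg}.

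The base case $j=0$ is immediate: $T_{i,0}=0$ and $\psi_N(0)u_0 = u_0 \in A(D_{i,1})$ for every $N \in \mathcal{N}$. For the inductive step, suppose the claim holds at stage $j$. Since $\psi(T_{i,j})u_0 \in A(D_{i,j+1})$, theorem \ref{locex} extends the flow by $\tau_1(D_{i,j+1})$, reaching $T_{i,j+1}$. To propagate the membership to stage $j+1$, apply lemma \ref{loccg} with initial datum $\psi_N(T_{i,j})u_0 \in A(D_{i,j+1})$ (which is valid because $N \in \mathcal{N}$). Using the semigroup property $\psi_N(t)\psi_N(T_{i,j})u_0 = \psi_N(T_{i,j}+t)u_0$, one obtains, for any $\epsilon > 0$ and all sufficiently large $N \in \mathcal{N}$,
\begin{equation*}
\bigl\|\psi(\tau_1(D_{i,j+1}))\bigl(\psi_N(T_{i,j})u_0\bigr) - \psi_N(T_{i,j+1})u_0 \bigr\|_{H^s} < \epsilon.
\end{equation*}
Combining this with the inductive hypothesis $\psi_N(T_{i,j})u_0 \to \psi(T_{i,j})u_0$ in $H^s$ and the continuity of $\psi(\tau_1(D_{i,j+1}))$ at $\psi(T_{i,j})u_0$ gives $\psi_N(T_{i,j+1})u_0 \to \psi(T_{i,j+1})u_0$ in $H^s$ along $\mathcal{N}$. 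Since $\psi_N(T_{i,j+1})u_0 \in A(D_{i,j+2})$ for $N \in \mathcal{N}$ and $A(D_{i,j+2})$ is closed, the limit $\psi(T_{i,j+1})u_0$ also lies in $A(D_{i,j+2})$. The negative-time case is symmetric.

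Global existence then follows once $T_{i,j}\to\infty$ is verified. Since $\gamma_1\geq\gamma$, one has $\tau_1(D)=c_1(1+D)^{-\gamma_1}$ for $D$ large, and with $D_{i,j}=(i+j^{1/\gamma_1})^{1/2}$ this yields $\tau_1(D_{i,j})\gtrsim j^{-1/2}$ for $j$ large, so that $T_{i,j}\gtrsim\sum_{l=1}^j l^{-1/2}\sim j^{1/2}\to\infty$. The main obstacle in the argument is the continuity step: transferring $H^s$-convergence of $\psi_N(T_{i,j})u_0$ into $H^s$-convergence of the composed flow $\psi(\tau_1(D_{i,j+1}))\psi_N(T_{i,j})u_0$. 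This rests on continuity of the local flow $\psi(\tau)$ on $A(D_{i,j+1})$ in the $H^s$ topology, which can be handled either via the fixed-point scheme or through the finite-dimensional sandwich described above.
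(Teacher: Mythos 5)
Your proof is correct and follows the same overall inductive structure as the paper's (establish $\psi_N(\pm T_{i,j})u_0\to\psi(\pm T_{i,j})u_0$ in $H^s$ along the subsequence $\mathcal N$, then use lemma \ref{loccg} to propagate to stage $j+1$), but the key decomposition in the inductive step is genuinely different. You split
$$\psi(T_{i,j+1})u_0-\psi_N(T_{i,j+1})u_0=\bigl[\psi(\tau_1)\psi(T_{i,j})u_0-\psi(\tau_1)\psi_N(T_{i,j})u_0\bigr]+\bigl[\psi(\tau_1)\psi_N(T_{i,j})u_0-\psi_N(T_{i,j+1})u_0\bigr],$$
controlling the first bracket by continuity of the \emph{exact} flow $\psi(\tau_1)$ on $A(D_{i,j+1})$ and the second by lemma \ref{loccg} applied at $\psi_N(T_{i,j})u_0$. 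The paper instead sets $u_\pm=\psi(\pm T_{i,j})u_0$ and splits
$$\psi(T_{i,j+1})u_0-\psi_{N_k}(T_{i,j+1})u_0=\bigl[\psi(\tau_1)u_\pm-\psi_{N_k}(\tau_1)u_\pm\bigr]+\bigl[\psi_{N_k}(\tau_1)u_\pm-\psi_{N_k}(\tau_1)\psi_{N_k}(T_{i,j})u_0\bigr],$$
controlling the first bracket by lemma \ref{loccg} at $u_\pm$ and the second by \emph{uniform-in-$N$} continuity of the truncated flows $\psi_N(\tau_1)$. So the two arguments offload the continuity burden differently: you need continuity of the nonlinear flow $\psi$ itself, the paper needs equicontinuity of the family $\{\psi_N\}$. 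Both are asserted rather than proved in detail; your sandwich trick (fix large $M$, use lemma \ref{loccg} twice plus continuity of $\psi_M$ alone) deduces continuity of $\psi$ without requiring uniformity in $N$, which is a cleaner bootstrap. You are also more complete in two places: you note explicitly that $A(D)$ is closed (via weak lower semicontinuity of the $L^p$ norm; closedness in $H^s$ would already suffice and is what the paper implicitly uses through $\|\cdot\|_{H^\sigma}\le\|\cdot\|_{H^s}$ and $\|S(t)\cdot\|_{L^p}\lesssim\|\cdot\|_{H^s}$), and you verify $T_{i,j}\to\infty$, which the paper defers to the following proposition even though it is needed here to justify the ``strongly globally defined'' claim.
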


\begin{proof} As $u_0 \in \Pi_i = \limsup \Pi_{N,i}$, there exists a sequence $N_k \rightarrow \infty$ such that for all $k$, $u_0 \in \Pi_{N_k,i}$, which is equivalent to $\psi_{N_k}(\pm T_{i,j})u_0 \in A(D_{i,j})$ for all $j \in \N$.

\smallskip

Then, by recurrence over $j$, it can be proved that $\psi(t) u_0$ is defined on $[-T_{i,j},T_{i,j}]$ and that $ \psi(\pm T_{i,j})u_0 -\psi_{N_k}(\pm T_{i,j})u_0$ converges toward $0$ in $\Hs^s$ when $k\rightarrow \infty$.

\smallskip

For $j=0$, $T_{i,0}= 0$ and so $u_0 = \psi_{N_k}(\pm T_{i,0})u_0 \in A(D_{i,1}$ and $\psi(T_{i,0})u_0- \psi_{N_k}(T_{i,0})u_0 = 0$ converges toward $0$ in $H^s$. 

\smallskip

Suppose that at rank $j$, $\psi(t) u_0$ strongly exists on $[-T_{i,j},T_{i,j}]$ and $\psi(\pm T_{i,j})(u_0)-\psi_{N_k}(\pm T_{i,j})(u_0)$ converges toward $0$  in $H^s$. Let us show that the property holds at rank $j+1$. As 

$$||\psi(\pm T_{i,j})(u_0)-\psi_{N_k}(\pm T_{i,j})(u_0)||_{H^\sigma} \leq ||\psi(\pm T_{i,j})(u_0)-\psi_{N_k}(\pm T_{i,j})(u_0)||_{H^s}$$
and

$$||S(t)\left(\psi(\pm T_{i,j})(u_0)-\psi_{N_k}(\pm T_{i,j})(u_0)\right) ||_{L^p_{t,x}} \leq ||\psi(\pm T_{i,j})(u_0)-\psi_{N_k}(\pm T_{i,j})(u_0)||_{H^s}$$

and for all $k$,

$$||\psi_{N_k}(\pm T_{i,j})(u_0)||_{H^\sigma} \leq D_{i,j+1}$$
and

$$||S(t)\left(\psi_{N_k}(\pm T_{i,j})(u_0) \right) ||_{L^p_{t,x}}\leq D_{i,j+1}\; ,$$

by taking the limits when $k\rightarrow \infty$, it comes that $u_\pm : = \psi(\pm T_{i,j})u_0 \in A(D_{i,j+1})$.

\smallskip

So, thanks to theorem \reff{locex} $\psi(t)u_\pm$ is strongly defined on $[-\tau_1,\tau_1] \subseteq [-\tau,\tau]$ and thanks to lemma \reff{loccg},

$$\psi(t) u_\pm -\psi_{N_k}(t) u_\pm$$

converges toward $0$ in $X^s_{\tau_1(D_{i,j+1})}$. In particular,

$$\psi(\pm \tau_1(D_{i,j+1}) u_\pm - \psi_{N_k}(\pm \tau_1(D_{i,j+1})) u_\pm$$
converges toward $0$ in $H^s$.

\smallskip

Then, as

$$\psi(\pm T_{i,j+1})u_0 - \psi_{N_k}(\pm T_{i,j+1}) u_0 = \psi(\pm \tau_1(D_{i,j+1)})u_\pm -\psi_{N_k}(\pm \tau_1(D_{i,j+1})) u_\pm + $$

$$\psi_{N_k}(\pm \tau_1(D_{i,j+1}))u_\pm - \psi_{N_k}(\pm \tau_1(D_{i,j+1})) (\psi_{N_k}(\pm T_{i,j})u_0 $$
and since $v \mapsto \psi_{N}(\pm \tau_1(D_{i,j+1})) v$ is uniformly in $N$ continuous from $ H^s \cap A(D_{i,j+1})$ to $H^s$, it implies that 

$$\psi_{N_k}(\pm \tau_1(D_{i,j+1}))u_\pm - \psi_{N_k}(\pm \tau_1(D_{i,j+1})) (\psi_{N_k}(\pm T_{i,j})u_0$$
also converges toward $0$ in $H^s$. Therefore,

$$\psi(\pm T_{i,j+1})u_0 - \psi_{N_k}(\pm T_{i,j+1}) u_0$$
converges toward $0$ in $H^s$ and as it has precedently been seen, it implies that $\psi(\pm T_{i,j+1}) \in A(D_{i,j+2})$.
\end{proof}

\subsection{Global invariance}

Now, a first result of global invariance can be proved.

\begin{proposition}Let $A$ be a measurable set included in $\Pi_i$. Then for all $t\in \R$, we have

$$\rho(\psi(t)(A)) = \rho(A) \; .$$
\end{proposition}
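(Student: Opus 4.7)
The approach is a telescoping induction along the grid of times $\{T_{i,j}\}_{j\in\N}$, combining the local invariance statement of Lemma \reff{locgrowth} with the iterate-containment $\psi(\pm T_{i,j})u_0 \in A(D_{i,j+1})$ supplied by Lemma \reff{discon}. Intuitively, on $\Pi_i$ the flow is strongly globally defined and hits a controlled set at each time $\pm T_{i,j}$, so we chain the local invariance result across adjacent intervals of the partition $\bigcup_j [T_{i,j},T_{i,j+1}]$.

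I would first observe that $T_{i,j} \to +\infty$. Indeed, $D_{i,l} \sim l^{1/(2\gamma_1)}$ for large $l$, so $\tau_1(D_{i,l}) = c_1(1+D_{i,l})^{-\gamma_1}$ behaves like $c_1\, l^{-1/2}$, and hence $T_{i,j} \sim 2c_1\sqrt{j}$ diverges. Consequently every $t\geq 0$ lies in some interval $[T_{i,j}, T_{i,j+1}]$, and every $t\leq 0$ in some $[-T_{i,j+1}, -T_{i,j}]$. Since the reversibility of $\psi$ on $\Pi_i$ is already built into Lemma \reff{discon} (which provides $\psi(-T_{i,j}) u_0 \in A(D_{i,j+1})$ as well), the negative case is entirely symmetric, so I only treat $t\geq 0$.

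I would then prove by induction on $j\in\N$ the statement
$$(P_j): \quad \psi(T_{i,j})(A) \subseteq A(D_{i,j+1}) \text{ and } \rho\bigl(\psi(T_{i,j})(A)\bigr) = \rho(A)$$
for every measurable $A\subseteq \Pi_i$. The inclusion is immediate from Lemma \reff{discon} applied pointwise, and measurability of $\psi(T_{i,j})(A)$ follows from local continuity of the flow on $H^\sigma$ where it is defined. The base case $j=0$ is trivial since $T_{i,0}=0$. For the inductive step, set $A_j := \psi(T_{i,j})(A)$; by $(P_j)$ we have $A_j \subseteq A(D_{i,j+1})$ and $\rho(A_j)=\rho(A)$. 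Since $\tau_1(D_{i,j+1})$ is precisely the local invariance radius on $A(D_{i,j+1})$, Lemma \reff{locgrowth} yields
$$\rho\bigl(\psi(s')(A_j)\bigr) = \rho(A_j) = \rho(A), \qquad s'\in[-\tau_1(D_{i,j+1}),\tau_1(D_{i,j+1})].$$
Taking $s' = \tau_1(D_{i,j+1})$ and using the semigroup identity $\psi(T_{i,j+1}) = \psi(s')\circ\psi(T_{i,j})$, valid on $\Pi_i$ by the global existence of Lemma \reff{discon}, gives $(P_{j+1})$.

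The conclusion for arbitrary $t\geq 0$ then follows by choosing the unique $j$ with $T_{i,j}\leq t \leq T_{i,j+1}$, setting $s := t-T_{i,j} \in [0,\tau_1(D_{i,j+1})]$, and applying Lemma \reff{locgrowth} once more to $A_j \subseteq A(D_{i,j+1})$ at time $s$:
$$\rho\bigl(\psi(t)(A)\bigr) = \rho\bigl(\psi(s)(A_j)\bigr) = \rho(A_j) = \rho(A).$$
The main obstacles are already absorbed into the preparatory lemmas: global well-posedness on $\Pi_i$, dependence of the local radius $\tau_1$ only on $D$, and the divergence of $(T_{i,j})_j$. Once these three facts are in place, the argument is a pure telescoping bookkeeping along the adapted time grid.
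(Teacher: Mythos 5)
Your proposal is correct and takes essentially the same route as the paper: prove the divergence of $(T_{i,j})_j$ from the power-law decay of $\tau_1(D_{i,j})$, then chain Lemma \reff{locgrowth} across the adapted time grid using the containment $\psi(\pm T_{i,j})(\Pi_i)\subseteq A(D_{i,j+1})$ from Lemma \reff{discon}. The only cosmetic difference is that you induct on a "grid-point" statement $(P_j)$ and then apply the local lemma once more to cover intermediate times, whereas the paper's induction statement directly covers all $t$ in the interval $[T_{i,j},T_{i,j+1}]\cup[-T_{i,j+1},-T_{i,j}]$ at each step; both are the same telescoping argument.
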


\begin{proof}

In order to prove such a fact, it is required that the sequence $T_{i,j}$ where $i$ is fixed diverges.

\smallskip

Indeed,

$$\tau_1(D_{i,j}) = \min(\tau(D_{i,j},c_1(1+D_{i,j})^{-\gamma_1}) = \min (c(1+D_{i,j})^{-\gamma},c_1(1+D_{i,j})^{-\gamma_1})$$
and $D_{i,j}= \sqrt{i+j^{1/\gamma_1}}$ diverges. Therefore, as $\gamma_1 \geq \gamma$, above a certain rank $\tau_1(D_{i,j}) = c_2(1+D_{i,j})^{-\gamma_1}$ with $c_2 = c_1$ if $\gamma < \gamma_1$ or $c_2 = \min(c,c_1)$ otherwise.

\smallskip

So, $\tau_1(D_{i,j})$ behaves like $j^{-1/2}$ when $j \rightarrow \infty$ and so the sequence $T_{i,j}$ diverges.

\smallskip

Let $t\in \R$, there exists $j$ such that $t \in [T_{i,j},T_{i,j+1}]$ if $t\geq 0$ or $t\in [-T_{i,j+1},-T_{i,j} ]$. Let us show by recurrence over $j$ that for all $t \in [T_{i,j},T_{i,j+1}]\cup [-T_{i,j+1},-T_{i,j} ]$,

$$\rho(\psi(t)A) = \rho(A)$$.

For $j=0$, we have $T_{i,0} = 0$, $T_{i,1} = \tau_1(D_{i,1})$ and $A = \psi(T_{i,0}) (A) \subseteq \psi(T_{i,0})(\Pi_i) \subseteq A(D_{i,1})$ thanks to lemma \reff{discon}. So, the local invariance lemma \reff{locgrowth} holds : for all $t\in [-\tau_1(D_{i,1}),\tau_1(D_{i,1})]$,

$$\rho(\psi(t)(A)) = \rho(A)\; .$$

For $j-1\Rightarrow j$,  $\psi(\pm T_{i,j})(A) \subseteq \psi(T_{i,j}) (\Pi_i) \subseteq A(D_{i,j+1})$ (lemma \reff{discon}). So, by using lemma \reff{locgrowth}, for all $t\in [0, \tau_1(D_{i,j+1})]$,

$$\rho\left(\psi(\pm t)\left( \psi(\pm T_{i,j})(A) \right) \right) = \rho\left( \psi(\pm T_{i,j})(A)\right) \; .$$

Then, by using the recurrence hyothesis,

$$\rho\left( \psi(\pm T_{i,j})(A)\right) = \rho(A) \; .$$

And so, for all 

$$t\in [T_{i,j},T_{i,j}+\tau_1(D_{i,j+1})]\cup [-T_{i,j}-\tau_1(D_{i,j+1}),-T_{i,j} ]$$

$$= [T_{i,j},T_{i,j+1}]\cup [-T_{i,j+1},-T_{i,j} ]\; , $$ 

it comes

$$\rho(\psi(t)(A)) = \rho(\psi(t)(A)) \; .$$

\end{proof}

\begin{theorem}For all $\rho$ measurable set included in $\Pi$, we have :

$$\rho(\psi(t)(A)) = \rho(A) \; .$$
\end{theorem}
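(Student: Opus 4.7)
The plan is to reduce the statement to the previous proposition, which already establishes the same equality for measurable subsets of each $\Pi_i$, and then to patch the pieces together using countable additivity. Since $\Pi = \bigcup_{i \in \N} \Pi_i$ is a countable union, the natural move is to disjointify.

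First, I would set $\Pi_i' = \Pi_i \setminus \bigcup_{k < i} \Pi_k$, so that the $\Pi_i'$ form a countable partition of $\Pi$ by measurable sets (the measurability of each $\Pi_i$ follows from its construction as $\limsup_N \Pi_{N,i}$, the $\Pi_{N,i}$ being countable intersections of preimages of the closed sets $A(D_{i,j+1})$ under the continuous flows $\psi_N(\pm T_{i,j})$). Given an arbitrary $\rho$-measurable $A \subseteq \Pi$, I would then write $A = \bigsqcup_{i \in \N} A_i$ with $A_i = A \cap \Pi_i'$, where each $A_i$ is a measurable subset of $\Pi_i$.

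Next, I would apply the previous proposition to each piece: for every $i$ and every $t \in \R$, one has $\rho(\psi(t)A_i) = \rho(A_i)$. To combine these, I need $\psi(t)A = \bigsqcup_i \psi(t)A_i$ as a disjoint union. Disjointness follows from the injectivity of $\psi(t)$ on $\Pi$, which is itself a consequence of the reversibility built into lemma \ref{discon} (the flow is strongly globally defined on $\Pi$, and $\psi(-t)\psi(t) = \mathrm{Id}$ on that set). Measurability of $\psi(t)A_i$ follows from the continuity of the local flow, as already used in lemma \ref{locgrowth}.

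Finally, countable additivity of $\rho$ yields
\[
\rho(\psi(t)A) = \sum_{i \in \N} \rho(\psi(t)A_i) = \sum_{i \in \N} \rho(A_i) = \rho(A),
\]
which is the desired identity. The only real subtlety is making sure that the images $\psi(t)A_i$ are honestly disjoint and measurable; neither is a serious obstacle given the reversibility of $\psi$ on $\Pi$ and the continuity properties collected in the previous section, so the proof is essentially a packaging of the local-to-semiglobal invariance proposition together with countable additivity.
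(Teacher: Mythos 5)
Your proposal is correct and follows essentially the same route as the paper: disjointify $\Pi = \bigcup_i \Pi_i$, split $A$ into disjoint measurable pieces $A_i \subseteq \Pi_i$, invoke the preceding proposition on each piece, and conclude by countable additivity. You spell out the disjointification ($\Pi_i' = \Pi_i \setminus \bigcup_{k<i}\Pi_k$) and the injectivity of $\psi(t)$ on $\Pi$ a bit more explicitly than the paper does, but the argument is the same.
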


\begin{proof} As $\Pi = \bigcup_{i}\Pi_i$, and $A\subseteq \Pi$, $A$ can be written : 

$$A= \bigsqcup_{i\in \N} A_i$$
with $A_i\subseteq \Pi_i$, and the $A_i$ disjoint. So,

$$\psi(t)(A) = \bigsqcup_{i\in \N } \psi(t)A_i$$
since the flow is strongly defined in $\Pi$.

$$\rho(\psi(t)A) = \sum_{i\in \N } \rho(\psi(t)A_i) = \sum_{i\in \N} \rho(A_i) = \rho(\bigsqcup_{i\in \N} A_i) = \rho(A)\; . $$

\end{proof}

\bibliographystyle{amsplain}
\bibliography{bibliagain} 
\nocite{*}

\end{document}